\documentclass[a4paper,leqno]{amsproc}

\usepackage{amsmath,amssymb,amsthm}
\usepackage[colorlinks=true]{hyperref}

\def\dis
{\displaystyle}

\def\R{{\mathbf R}}
\def\N{{\mathbf N}}

\def\virgp{\raise 2pt\hbox{,}}
\def\({\left(}
\def\){\right)}
\def\<{\left\langle}
\def\>{\right\rangle}
\def\le{\leqslant}
\def\ge{\geqslant}

\def\Eq#1#2{\mathop{\sim}\limits_{#1\rightarrow#2}}
\def\Tend#1#2{\mathop{\longrightarrow}\limits_{#1\rightarrow#2}}

\def\d{{\partial}}

\def\Norm#1#2{\left\lVert #1 \right\rVert_{#2}}

\def\conjug{\mathcal C}

\theoremstyle{plain}
\newtheorem{theorem}{Theorem}[section]
\newtheorem{lemma}[theorem]{Lemma}

\newtheorem{proposition}[theorem]{Proposition}
\newtheorem*{concl}{Conclusion}

\theoremstyle{definition}
\newtheorem{definition}[theorem]{Definition}

\newtheorem{remark}[theorem]{Remark}

\numberwithin{equation}{section}

\begin{document}

\title[Rotating points for NLS scattering]{Rotating
  points for the conformal NLS scattering operator}    
\author[R. Carles]{R\'emi Carles}
\address{Universit\'e Montpellier~2\\ 
Math\'ematiques,
  CC051\\34095 
  Montpellier\\ France}
\address{CNRS\\ UMR 5149\\34095 
  Montpellier\\ France}
\email{Remi.Carles@math.cnrs.fr}
\begin{abstract}
We consider the  nonlinear
Schr\"odinger equation, with mass-critical nonlinearity, focusing or
defocusing. For any given angle, we establish 
the existence of infinitely many functions on which the 
scattering operator acts as a rotation of this angle. Using a lens
transform, we reduce the problem to the existence of a solution to a
nonlinear Schr\"odinger equation with harmonic potential, satisfying
suitable periodicity 
properties. The existence of infinitely many such solutions is
proved thanks to a constrained minimization problem.  
\end{abstract}
\thanks{This work was supported by the French ANR project
  R.A.S. (ANR-08-JCJC-0124-01).} 
\maketitle

\section{Introduction}
\label{sec:intro}
We consider the pseudo-conformally invariant nonlinear Schr\"odinger
equation 
\begin{equation}
  \label{eq:nls}
  i\d_t u + \frac{1}{2}\Delta u = \lvert u\rvert^{4/d} u,\quad
  (t,x)\in \R\times \R^d,\ d\ge 1.
\end{equation}
Two types of initial data are of special interest:
\begin{align}
  &\text{Asymptotic state: }U_0(-t)u(t)\big|_{t=\pm
  \infty}=u_\pm,\quad \text{where
  }U_0(t)=e^{i\frac{t}{2}\Delta}.\label{eq:CIasym} \\
&\text{Cauchy data at }t=0:\ u_{\mid t=0} = u_0.\label{eq:CIcauchy}
\end{align}
It is well known that for data $u_\pm\in \Sigma=H^1\cap{\mathcal F}(H^1)$, 
where
\begin{equation}\label{eq:fourier}
  {\mathcal F} f(\xi)=\widehat
  f(\xi)=\frac{1}{(2\pi)^{d/2}}\int_{\R^d}f(x)e^{-ix\cdot \xi} dx,
\end{equation}
\eqref{eq:nls}--\eqref{eq:CIasym} has a unique, global, solution
$u\in C(\R;\Sigma)$ (\cite{GV79Scatt}, see also
\cite{CazCourant}). Its initial value $u_{\mid t=0}$ is the image of 
the asymptotic state under the action of the wave operator:
\begin{equation*}
  u_{\mid t=0}= W_\pm u_\pm. 
\end{equation*}
Similarly, if $u_0\in \Sigma$, \eqref{eq:nls}--\eqref{eq:CIcauchy}
possesses asymptotic states:
\begin{equation*}
  \exists u_\pm\in \Sigma,\quad
  \|U_0(-t)u(t)-u_\pm\|_\Sigma \Tend t {\pm \infty} 0: \quad 
  u_\pm=W_\pm^{-1}u_0.
\end{equation*}
The scattering operator associated to \eqref{eq:nls} is classically
defined as
\begin{equation*}
  S=W_+^{-1}\circ W_-: u_-\mapsto u_+.
\end{equation*}
It maps $\Sigma$ to $\Sigma$, and is unitary on $L^2$, and on $\dot H^1$:
\begin{equation*}
  \lVert S(u_-)\rVert_{L^2(\R^d)}=\lVert u_-\rVert_{L^2(\R^d)}\quad
  ;\quad \lVert \nabla S(u_-)\rVert_{L^2(\R^d)}=\lVert \nabla
  u_-\rVert_{L^2(\R^d)}. 
\end{equation*}
This follows from \cite{GV79Scatt,TsutsumiSigma,HT87}. 
\smallbreak

Besides the existence of the wave and scattering operators, it seems
that very few of their properties are known. By construction, these
operators are continuous on $\Sigma$. When $d\le 2$ (the
nonlinearity is smooth), these operators are real analytic on
$\Sigma$; see \cite{CG09}.   
\smallbreak

It is rather reassuring to check that the operators $W_\pm$ and $S$
are not trivial, showing that averaged in time nonlinear
effects may not be negligible. Following \cite{PG96} for the case of
the wave equation, we can prove for instance that in $L^2(\R^d)$, and
as $\varepsilon\to 0$, 
\begin{equation}\label{eq:DA0}
 S \left(\varepsilon u_-\right) = \varepsilon u_- -
 i\varepsilon^{1+4/d}P(u_-) + {\mathcal O}\left(\varepsilon^{1+8/d}\right), 
\end{equation}
where 
\begin{equation*}
 P(u_-)=  \int_{-\infty}^{+
 \infty} U_0(-t)\left( |U_0(t)u_-|^{4/d}U_0(t)u_-\right)dt.
\end{equation*}
We refer to \cite{COMRL} for a proof (in the present small data case,
it suffices to assume that $u_-\in L^2(\R^d)$). Explicit computations
show that $P(u_-)\not =0$ when $u_-$ is Gaussian, therefore $S$ is
not the identity. 
\smallbreak

A few algebraic properties are available. Let $\conjug$ denote the
conjugation $f\mapsto \overline f$. The invariances of the equation show that
\begin{equation}\label{eq:conj}
  W_\pm = \conjug \circ W_\mp\circ\conjug ,
\end{equation}
an identity which was noticed in \cite{CW92} (see also
\cite{CazCourant}).  Due to the invariance of \eqref{eq:nls} under
translation and gauge transforms, 
\begin{equation}\label{eq:invariant}
  \begin{aligned}
    &S\( u_-(\cdot +a)\)= S\(u_-\)(\cdot +a),\ \forall a\in\R^d,\\
  &S\( e^{i\eta}u_-\)= e^{i\eta}S(u_-),\ \forall
  \eta\in\R.
\end{aligned}
\end{equation}
Another 
algebraic relation was established in \cite{COMRL}, which seems to be bound to
the conformal case, contrary to \eqref{eq:conj} and \eqref{eq:invariant}:
  \begin{equation*}
    {\mathcal F}\circ W_\pm^{-1}=W_\mp\circ {\mathcal F}.
  \end{equation*}
In \cite{BG99}, a remarkable property was proved for the scattering
operator associated to the energy-critical wave equation
\begin{equation}\label{eq:nlw}
  \d_t^2 u-\Delta u+ \lvert u\rvert^4 u=0\quad ;\quad x\in \R^3. 
\end{equation}
Using the notion of profile decomposition, as introduced in
\cite{PG98,MetivierSchochetDMJ}, the authors prove in \cite{BG99} that
the scattering operator associated to \eqref{eq:nlw}, and defined on
the energy space, enjoys a surprising nonlinear superposition
principle. It follows from \cite{Keraani01,CKSTTAnnals} that a
similar result holds for the Schr\"odinger analogue of 
\eqref{eq:nlw},
\begin{equation*}
  i\d_t u +\frac{1}{2}\Delta u = \lvert u\rvert^{4}u\quad ;\quad x\in
  \R^3.
\end{equation*}
We give more precise statements in an appendix, where we also discuss
the case of \eqref{eq:nls}. 
In this paper, we show the existence of infinitely many
fixed points for $S$, and more generally:
\begin{theorem}\label{theo:main}
  Let $d\ge 1$ and $S:\Sigma\to \Sigma$ be the scattering operator
  associated to \eqref{eq:nls}. For any $\theta\in [0,2\pi[$, there are
  infinitely many functions $u_-\in \Sigma$ such that
  $S(u_-)=e^{i\theta}u_-$. 
\end{theorem}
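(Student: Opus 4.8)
The plan is to conjugate away the part of the flow that does not feel the nonlinearity, by means of the lens (pseudo-conformal) transform, which is tailored to the mass-critical power. Recall that there is an explicit change of unknown mapping solutions $u\in C(\R;\Sigma)$ of \eqref{eq:nls} to solutions $w$, defined on a fixed bounded interval $(-T_0,T_0)$, of the nonlinear Schr\"odinger equation with harmonic potential
\begin{equation*}
  i\d_t w + \frac12\Delta w = \frac{|x|^2}{2}\,w + |w|^{4/d}w
\end{equation*}
(with $-|w|^{4/d}w$ in the focusing case), in such a way that the map ``asymptotic states $u_\pm$'' $\mapsto$ ``boundary values $w(\pm T_0)$'' is a fixed unitary operator of $\Sigma$: the Fourier transform, composed with a $d$-dependent phase. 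This rests on the classical asymptotics of $U_0(t)$ as $t\to\pm\infty$ together with the continuity of $w$ up to $t=\pm T_0$; in effect, the lens transform intertwines the (finite-time) harmonic evolution with the scattering operator $S$ of \eqref{eq:nls}, modulo a Fourier transform. Consequently, after conjugating by $\mathcal F^{-1}$, the relation $S(u_-)=e^{i\theta}u_-$ becomes exactly the boundary condition
\begin{equation*}
  w(T_0)=e^{i\gamma}\,w(-T_0),\qquad \gamma=\gamma(\theta,d),
\end{equation*}
a ``periodicity up to a phase'' for the harmonic NLS; and conversely, any $w\in C([-T_0,T_0];\Sigma)$ solving the harmonic NLS with this boundary condition yields, through the inverse transform, a global solution of \eqref{eq:nls} in $\Sigma$ with $S(u_-)=e^{i\theta}u_-$ and $\Norm{u_-}{L^2}=\Norm{w(-T_0)}{L^2}$.

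The simplest $w$ to try are the standing waves $w(t,x)=e^{-i\omega t}\phi(x)$, for which the boundary condition collapses to the arithmetic constraint $e^{-2i\omega T_0}=e^{i\gamma}$; that is, $\omega$ must lie in an explicit arithmetic progression $\omega\in\omega_\theta+\frac{\pi}{T_0}\Z$. The profile $\phi$ then has to solve the semilinear elliptic equation
\begin{equation*}
  -\frac12\Delta\phi+\frac{|x|^2}{2}\phi+|\phi|^{4/d}\phi=\omega\phi
\end{equation*}
(resp. with $-|\phi|^{4/d}\phi$). I would obtain $\phi$ from the constrained minimization problem
\begin{equation*}
  \inf\Big\{\,\frac12\Norm{\nabla\phi}{L^2}^2+\frac12\Norm{\,|x|\phi}{L^2}^2 \pm \frac{d}{2d+4}\Norm{\phi}{L^{2+4/d}}^{2+4/d}\ :\ \phi\in\Sigma,\ \Norm{\phi}{L^2}^2=\rho\,\Big\},
\end{equation*}
with the sign $+$ in the defocusing and $-$ in the focusing case. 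The confining potential makes the embeddings $\Sigma\hookrightarrow L^2$ and $\Sigma\hookrightarrow L^{2+4/d}$ compact, so minimizing sequences are precompact and a minimizer $\phi_\rho$ exists, once the infimum is known to be finite: this is automatic in the defocusing case, and follows from the sharp Gagliardo--Nirenberg inequality in the focusing case as long as $\rho$ stays below the Weinstein threshold mass. The minimizer solves the elliptic equation with $\omega=\omega(\rho)$ the Lagrange multiplier, and $\Norm{\phi_\rho}{L^2}^2=\rho$.

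It remains --- and this is the crux --- to show that $\omega(\rho)$ sweeps out a half-line, so that it meets the progression $\omega_\theta+\frac{\pi}{T_0}\Z$ infinitely often; distinct values of $\omega$ then correspond to distinct masses $\rho$, hence to infinitely many distinct $u_-\in\Sigma$ with $S(u_-)=e^{i\theta}u_-$. As $\rho\to0$ the minimizer collapses onto the linear ground state, so $\omega(\rho)\to d/2=\inf\mathrm{spec}\big(-\tfrac12\Delta+\tfrac{|x|^2}{2}\big)$. In the defocusing case, a Thomas--Fermi type analysis of $\phi_\rho$ for large $\rho$ gives $\omega(\rho)\to+\infty$, so $\omega(\cdot)$ covers $(d/2,+\infty)$. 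In the focusing case, as $\rho$ increases toward the Weinstein threshold the minimizer concentrates like a rescaled ground state of the mass-critical elliptic equation without potential, with concentration rate going to infinity, while its energy tends to $0$; tracking the balance of the terms then forces $\omega(\rho)\to-\infty$, so $\omega(\cdot)$ again covers a half-line, and the mass stays sub-threshold throughout, which keeps the whole construction legitimate. The main obstacles are accordingly these two: the compactness and, in the focusing case, the boundedness from below of the minimization; and above all the asymptotic analysis of the Lagrange multiplier $\omega(\rho)$ at the two ends of the admissible range of masses. The identification of the scattering data of the lens-transformed solution with the boundary values $w(\pm T_0)$, though classical, also requires some bookkeeping.
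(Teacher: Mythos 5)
Your first step (the lens transform, reducing $S(u_-)=e^{i\theta}u_-$ to a periodicity-up-to-phase condition for the harmonic NLS on a fixed interval, then trying standing waves) is exactly the paper's reduction, but your variational step is genuinely different, and the gap sits precisely where you yourself locate the crux. You fix the mass $\rho$ and minimize the energy, so the frequency appears as a Lagrange multiplier $\omega(\rho)$, and the whole proof hinges on $\omega(\rho)$ sweeping out a half-line so as to meet the arithmetic progression $\omega_\theta+\frac{\pi}{T_0}\Z$ infinitely often. This is asserted (``Thomas--Fermi analysis gives $\omega(\rho)\to+\infty$'', ``tracking the balance of terms forces $\omega(\rho)\to-\infty$'') but not proved, and it is not routine: minimizers at fixed mass need not be unique a priori, so $\rho\mapsto\omega(\rho)$ is not obviously single-valued, continuous, or even interval-ranged, and without that you cannot conclude the progression is hit at all, let alone infinitely often; the large-$\rho$ asymptotics in the defocusing case and the near-threshold concentration analysis in the focusing case are each substantial pieces of work. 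The paper sidesteps all of this by turning the problem around: it fixes the frequency in the prescribed progression, $\nu_j=\frac{d}{2}+2j-\frac{\theta}{\pi}$, and for each such $\nu$ minimizes $I(\psi)=\frac12\<H\psi,\psi\>-\frac{\nu}{2}\|\psi\|_{L^2}^2$ over radial $\psi\in\Sigma$ with $\frac{1}{1+2/d}\|\psi\|_{L^{2+4/d}}^{2+4/d}=1$ (Proposition~\ref{prop:cri}); the Lagrange multiplier then multiplies the homogeneous nonlinearity, only its sign is needed, and it is scaled away by replacing $\psi$ with $|\mu|^{d/4}\psi$. If you keep your fixed-mass formulation, you must supply the continuity/range and asymptotic analysis of $\omega(\rho)$; otherwise, prescribing $\nu$ as in the paper removes the obstacle entirely.

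A second, smaller point of bookkeeping that matters: the intertwining relation is not $w(T_0)=e^{i\gamma}w(-T_0)$. Because the harmonic oscillator rotates phase space by angle $\pi$ over the interval, the correct statement (Lemma~\ref{lem:waveharmo}) is $v(-\pi/2,x)=e^{id\pi/4}{\mathcal F}(u_-)(-x)$ and $v(\pi/2,x)=e^{-id\pi/4}{\mathcal F}(Su_-)(x)$, so the condition to be imposed is $v(\pi/2,x)=e^{-id\pi/2+i\theta}\,v(-\pi/2,-x)$, with a spatial reflection. For a standing wave this forces the profile to have definite (even) parity, a constraint your minimization does not impose; the paper builds it in by working in the radial class. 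In the defocusing case this is fixable, but it must be addressed, since otherwise the arithmetic constraint you derive on $\omega$ is not the right one.
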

\begin{remark}
  For $\theta\in ]0,2\pi[$, this result yields another evidence that
  $S$ is not trivial.  
\end{remark}
\begin{remark}
  For $\theta=0$, this shows the existence of infinitely many fixed
  points. Similarly, for $\theta = 2\pi p/q$, $p,q\in \N^*$, this shows
  the existence of infinitely many periodic (or cyclic) points, for any given
  period. 
\end{remark}
\begin{remark}\label{rem:forme}
  We construct solutions of the form
  \begin{equation*}
    u(t,x) = \frac{1}{\(1+t^2\)^{d/4}}
    e^{i\frac{t}{1+t^2}\frac{|x|^2}{2}
    -i\(\frac{d}{2}+2j-\frac{\theta}{\pi}\)\operatorname{arctan} t
    }\phi_j\(\frac{x}{\sqrt{1+t^2}}\), 
  \end{equation*}
where $j\in \N\setminus\{0\}$ and $\phi_j\in \Sigma$. The profile
$\phi_j$ is given by the nonlinear eigenvalue equation \eqref{eq:stat}
with $\nu=d/2+2j+\theta/\pi$. 
\end{remark}
\begin{remark}
In semi-classical analysis, the scattering operator appears in some
  cases to describe solutions which pass through a focal point. In the
  presence of an isotropic (but not necessarily) harmonic potential,
  focusing at one point occurs periodically in time, and the
  scattering operator is iterated each time a focal point is
  traversed; see \cite{CaBook} and references therein. The existence of
  fixed points, and more 
  generally, of periodic points, shows that the nonlinear dynamics may
  reveal some periodicity in time, at leading order in the
  semi-classical limit.  
\end{remark}
\begin{remark}
  Not all the functions in $\Sigma$ are such that
  $S(u_-)(x)=e^{ih(x)}u_-(x)$ for some real-valued function $h$ (not
  necessarily constant). Arguing by contradiction and using
  \eqref{eq:DA0}, one can show that there exist two functions $u_-$
  and $\widetilde u_-$ in $\Sigma$ such that
  \begin{equation*}
    \lvert u_- (x)\rvert \equiv \lvert \widetilde u_- (x)\rvert, \quad
    \text{and} \quad
\lvert S(u_-)(x) \rvert \not\equiv \lvert S(\widetilde u_-)(x) \rvert.
  \end{equation*}
See \cite[\S7.4.3]{CaBook}. 
\end{remark}
\begin{remark}
  For the linear Schr\"odinger equation, one can construct
  \emph{transparent} potentials $V(x)$. This means that one can choose
  a potential $V$ such that any function $u(t,x) =e^{iEt}\psi(x)$,
  $E\in \R$,  solution to 
  \begin{equation*}
    i\d_t u+\frac{1}{2}\Delta u = Vu
  \end{equation*}
has a trivial scattering matrix; see \cite{Kerimov07} and references
therein. 
\end{remark}
In the focusing case
\begin{equation}
  \label{eq:nlsfoc}
  i\d_t u + \frac{1}{2}\Delta u = -\lvert u\rvert^{4/d} u,\quad
  (t,x)\in \R\times \R^d,
\end{equation}
no general scattering theory is available, since finite time blow-up
may occur (see e.g. \cite{CazCourant,TaoDisp}). We know however that
for (initial or asymptotic) data with a sufficiently small $L^2$ norm,
the solution is global, and there is scattering \cite{CW89}. Recall
that the ground state given as the unique positive, radially symmetric
solution to 
\begin{equation}
  \label{eq:Q}
  -\frac{1}{2}\Delta Q+Q=Q^{1+4/d},
\end{equation}
yields the best constant for the following Gagliardo--Nirenberg
inequality \cite{Weinstein83}:
 \begin{equation}\label{eq:GNopt}
   \|f\|_{L^{2+4/d}}^{2+4/d} \le
   \frac{d+2}{2d\|Q\|_{L^2}^{4/d}}\|f\|_{L^2}^{4/d} \|\nabla
   f\|_{L^2}^2,\quad \forall f\in H^1\big(\R^d\big). 
 \end{equation}
If $\|u\|_{L^2}<\|Q\|_{L^2}$, then all
$H^1$-solutions to \eqref{eq:nlsfoc} are global in time
\cite{Weinstein83}. It is conjectured that
the same holds true for $L^2$-solutions; see
\cite{MerleTsutsumi,KeraaniJFA,HK06,Tzirakis06,VisanZhang07,KTV,TVZradial,KVZ07}, 
and references therein.  
\begin{theorem}\label{theo:foc}
  Let $d\ge 1$. For any $\theta\in [0,2\pi[$, there are
  infinitely many functions $u_-\in \Sigma$ such that the scattering
  operator associated to the focusing equation \eqref{eq:nlsfoc} is
  well defined on $u_-$, and such that
  $S(u_-)=e^{i\theta}u_-$. In addition,
  \begin{itemize}
  \item These functions satisfy $\displaystyle
    \|u_-\|_{L^2}>\(\frac{d}{d+2}\)^{d/4} \|Q\|_{L^2}$.
\item They are arbitrarily large in $H^1(\R^d)$ (resp. in $\dot
  H^1(\R^d)$ if $d\ge 2$). 
\item For all such $u_-\in \Sigma$, there exists $\varepsilon>0$ such that if
 $\widetilde u_-\in \Sigma$ satisfies $\|u_--\widetilde
 u_-\|_{L^2}<\varepsilon$, then $S(\widetilde u_-)$ is 
  well-defined in $\Sigma$ (in particular, there is no blow-up). 
  \end{itemize}
\end{theorem}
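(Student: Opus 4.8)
The plan is to run the same machinery as for Theorem~\ref{theo:main}, keeping track of the effect of the focusing sign, and then to read off the three extra assertions. Under the lens transform
\[
u(t,x)=\frac{1}{(1+t^2)^{d/4}}\,e^{i\frac{t}{1+t^2}\frac{|x|^2}{2}}\,\psi\!\left(\arctan t,\frac{x}{\sqrt{1+t^2}}\right),
\]
a solution of \eqref{eq:nlsfoc} on $\R_t\times\R^d$ corresponds to a solution $\psi$ of the \emph{focusing} Schr\"odinger equation with harmonic potential
\[
i\d_s\psi+\frac12\Delta\psi=\frac{|y|^2}{2}\psi-|\psi|^{4/d}\psi,\qquad s\in\,]-\pi/2,\pi/2[,
\]
posed on a \emph{bounded} interval, the asymptotic states $u_\pm$ being the images of $\psi\big|_{s\to\pm\pi/2}$ under one and the same unitary map (a Fourier transform dressed by phases). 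I look for $\psi(s,y)=e^{-i\nu_j s}\phi_j(y)$ with $\nu_j=d/2+2j+\theta/\pi$, $j\in\N\setminus\{0\}$, so that $\phi_j$ must solve the focusing analogue of \eqref{eq:stat}, namely $-\tfrac12\Delta\phi_j+\tfrac{|y|^2}{2}\phi_j-|\phi_j|^{4/d}\phi_j=\nu_j\phi_j$. Such a $\psi$ is global in $s$ and has finite Strichartz norm on $[-\pi/2,\pi/2]$, so the associated $u$ is a global solution of \eqref{eq:nlsfoc} in $C(\R;\Sigma)$, $W_\pm$ and $W_\pm^{-1}$ are defined along this orbit (so that $S(u_-)$ makes sense, although \eqref{eq:nlsfoc} has no general scattering theory), and the phase computation of Remark~\ref{rem:forme} (already performed for Theorem~\ref{theo:main}) gives $U_0(-t)u(t)\to u_-$ in $\Sigma$ as $t\to-\infty$ and $U_0(-t)u(t)\to e^{i\theta}u_-$ as $t\to+\infty$, i.e. $S(u_-)=e^{i\theta}u_-$. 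Since distinct $j$ give distinct $\phi_j$ (they solve that equation with distinct eigenvalues $\nu_j$), one obtains infinitely many $u_-$.

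The step I expect to be the main obstacle is the construction of the profiles $\phi_j$. In the defocusing case one minimizes a functional that is bounded from below; with the focusing sign --- and since $\nu_j>d/2=\inf\mathrm{spec}\big(-\tfrac12\Delta+\tfrac{|y|^2}{2}\big)$ --- the quadratic form $\big\langle(-\tfrac12\Delta+\tfrac{|y|^2}{2}-\nu_j)\phi,\phi\big\rangle$ is indefinite: it is non-positive on the $(j{+}1)$-dimensional span of the radial Hermite modes of eigenvalue $\le\nu_j$. I would therefore obtain $\phi_j$ through a min--max (linking) argument on
\[
J_j(\phi)=\frac12\Big\langle\big(-\tfrac12\Delta+\tfrac{|y|^2}{2}-\nu_j\big)\phi,\phi\Big\rangle-\frac{d}{d+2}\,\|\phi\|_{L^{2+4/d}}^{2+4/d},
\]
or, equivalently, through a constrained minimization restricted to a suitable nodal class on which the Lagrange multiplier can be prescribed to equal $\nu_j$. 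On the orthogonal complement of the above finite-dimensional subspace the operator is coercive with gap $d/2+2(j{+}1)-\nu_j=2-\theta/\pi>0$ (this is where $\theta\in[0,2\pi[$ enters), so $J_j$ has the linking geometry; Palais--Smale (resp. minimizing) sequences at the critical level are precompact because the harmonic potential makes the embedding $\Sigma\hookrightarrow L^{2+4/d}$ compact, the exponent $2+4/d$ being strictly $H^1$-subcritical in every dimension. Up to the usual rescaling, a critical point solves the focusing \eqref{eq:stat} with eigenvalue exactly $\nu_j$, and $\phi_j\in\Sigma$ by elliptic regularity; requiring $j\ge1$ (rather than $j\ge0$) is what keeps the construction non-degenerate when $\theta=0$.

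It remains to read off the three properties. The lens transform and the Fourier transform are $L^2$-isometries, so $\|u_-\|_{L^2}=\|\phi_j\|_{L^2}$ and $\|\nabla u_-\|_{L^2}=\||y|\,\phi_j\|_{L^2}$. The bound $\|\phi_j\|_{L^2}>(d/(d+2))^{d/4}\|Q\|_{L^2}$ follows by combining the Nehari and Pohozaev (virial) identities for $\phi_j$,
\[
\Big\langle\big(-\tfrac12\Delta+\tfrac{|y|^2}{2}\big)\phi_j,\phi_j\Big\rangle=\nu_j\|\phi_j\|_{L^2}^2+\|\phi_j\|_{L^{2+4/d}}^{2+4/d},\qquad \|\nabla\phi_j\|_{L^2}^2-\||y|\phi_j\|_{L^2}^2=\tfrac{2d}{d+2}\,\|\phi_j\|_{L^{2+4/d}}^{2+4/d},
\]
with the sharp Gagliardo--Nirenberg inequality \eqref{eq:GNopt} and $\nu_j>d/2$. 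These identities also give $\||y|\phi_j\|_{L^2}^2\ge\nu_j\|\phi_j\|_{L^2}^2\to\infty$ as $j\to\infty$, whence $\|u_-\|_{H^1}\to\infty$ (and $\|u_-\|_{\dot H^1}\to\infty$ when $d\ge2$): the functions are arbitrarily large. For the stability/no-blow-up statement, note that the reference solution $\psi=e^{-i\nu_j s}\phi_j$ has finite Strichartz norm on the \emph{compact} interval $[-\pi/2,\pi/2]$ (indeed $\|\psi\|_{L^{2+4/d}([-\pi/2,\pi/2]\times\R^d)}^{2+4/d}=\pi\|\phi_j\|_{L^{2+4/d}}^{2+4/d}<\infty$); by the long-time perturbation (``stability'') theory for the mass-critical NLS (see \cite{CKSTTAnnals,TaoDisp}, and \cite{CaBook} for the harmonic potential), there is $\varepsilon>0$ such that any data $\varepsilon$-close in $L^2$ to $\phi_j$ generates a solution with finite Strichartz norm on all of $[-\pi/2,\pi/2]$ --- hence with no blow-up there --- which moreover stays in $C([-\pi/2,\pi/2];\Sigma)$ when the data lies in $\Sigma$, by persistence of regularity. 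Pulling this back through the lens transform: if $\|u_--\widetilde u_-\|_{L^2}<\varepsilon$, the solution of \eqref{eq:nlsfoc} with asymptotic state $\widetilde u_-$ is global and belongs to $C(\R;\Sigma)$, so $S(\widetilde u_-)$ is well defined in $\Sigma$.
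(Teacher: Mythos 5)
Your overall architecture (lens transform, standing waves $e^{-i\nu t}\phi$, periodicity in $t$ of period $\pi$ up to the phase $e^{-id\pi/2+i\theta}$, perturbation on the compact interval $[-\pi/2,\pi/2]$) is the paper's, but the key choice of the frequencies is where the argument breaks. The periodicity constraint only fixes $\nu$ modulo $2$ (and with your sign $+\theta/\pi$ the bookkeeping actually yields $S(u_-)=e^{-i\theta}u_-$, a harmless relabeling of $\theta$), and the paper exploits this freedom to take, in the focusing case, $\nu_j=\tfrac d2-2j-\tfrac\theta\pi<\tfrac d2$. Then $H-\nu_j$ is positive definite, existence is the elementary constrained minimization of the second case of Proposition~\ref{prop:cri} (infimum strictly positive, compactness of $\Sigma\hookrightarrow L^2\cap L^{2+4/d}$, positive Lagrange multiplier), and no indefinite variational problem ever appears. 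By instead taking $\nu_j=\tfrac d2+2j+\tfrac\theta\pi>\tfrac d2$ you create a linking problem that the paper never has to face; as sketched it is moreover incomplete at $\theta=0$, since then $\nu_j=\tfrac d2+2j$ is an exact eigenvalue of $H$ on radial functions for \emph{every} $j\ge 1$, so the quadratic form has a kernel and the standard linking theorem does not apply as stated; your remark that requiring $j\ge1$ ``keeps the construction non-degenerate when $\theta=0$'' is not correct.

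The more serious consequence concerns the first two bullets, which are claims about the functions you construct. The paper's proof of $\|u_-\|_{L^2}>(d/(d+2))^{d/4}\|Q\|_{L^2}$ pairs \eqref{eq:statfoc} with $\phi_j$ and uses that $-\nu_j\ge 2j-\tfrac d2+\tfrac\theta\pi>0$, so that all terms besides $\tfrac12\|\nabla\phi_j\|_{L^2}^2$ on the left can be dropped before invoking \eqref{eq:GNopt}. With your $\nu_j>\tfrac d2$ the term $-\nu_j\|\phi_j\|_{L^2}^2$ has the unfavorable sign and the argument collapses: combining your Nehari and virial identities gives $\|\nabla\phi_j\|_{L^2}^2=\nu_j\|\phi_j\|_{L^2}^2+\tfrac{2(d+1)}{d+2}\|\phi_j\|_{L^{2+4/d}}^{2+4/d}$, and plugging this into \eqref{eq:GNopt} produces no contradiction when $\|\phi_j\|_{L^2}$ is small, only an upper bound on $\|\phi_j\|_{L^{2+4/d}}$. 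In fact, for $\theta$ close to $2\pi$ your $\nu_j$ lies just below the radial eigenvalue $\tfrac d2+2j+2$, where small-amplitude solutions bifurcating from the Hermite mode exist with arbitrarily small mass, so the stated lower bound cannot be expected for profiles at your frequencies. The second bullet then fails as well: your (correct) inequality $\|x\phi_j\|_{L^2}^2\ge\nu_j\|\phi_j\|_{L^2}^2$ yields divergence only if the mass is bounded below, which is exactly what is missing. The treatment of the third bullet (stability on the compact interval after the lens transform, plus persistence of $\Sigma$-regularity via the operators $J$, $K$) is in substance the paper's own argument and is fine; but to obtain the theorem as stated you should switch to the frequencies $\nu_j<\tfrac d2$, after which both the existence step and the quantitative bullets go through exactly as in the paper.
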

\begin{remark}
  We construct solutions of the same form as in
  Remark~\ref{rem:forme}, but with different profiles $\phi$, and in
  the phase, $-j\in \N$, with $-2j>d/2$. 
\end{remark} 
\begin{remark}
  In the defocusing case, the last property stated above is
  straightforward, since $S$ is defined on $\Sigma$. In the above
  focusing case, this stability property is more surprising. 
\end{remark}
Theorems~\ref{theo:main} and \ref{theo:foc} rely on two
steps. As shown in \S\ref{sec:reduc}, a lens transform reduces the
proof to the existence of time periodic solutions for the equation
\begin{equation*}
  i\d_t v +\frac{1}{2}\Delta v = \frac{\lvert x\rvert^2}{2} v \pm
  \lvert v\rvert^{4/d}v. 
\end{equation*}
Since the nonlinearity is autonomous, it is reasonable to expect
solutions to the above equation which are standing waves,
$v(t,x)=e^{-i\nu t}\psi(x)$. The point is that infinitely many values
for $\nu$ lead to periodic solutions with a \emph{suitable} period, that is,
such that by inverting the lens transform, we get 
Theorem~\ref{theo:main} and the existence part of
Theorem~\ref{theo:foc}. This step is achieved in 
\S\ref{sec:periodic}. The rest of the proof of Theorem~\ref{theo:foc}
is given in \S\ref{sec:focusing}.  Finally, in the appendix, we
discuss the nonlinear 
superposition principle associated to the scattering operator for
\eqref{eq:nls}, modulo some global existence issues which are still
open so far.

\section{Reduction of the problem}
\label{sec:reduc}

\subsection{Lens transform}
\label{sec:lens}

Let $u\in C(\R;\Sigma)$ solve the more general equation
\begin{equation}
  \label{eq:nlsgen}
  i\d_t u + \frac{1}{2}\Delta u = \lvert u\rvert^{2\sigma} u,\quad
  (t,x)\in \R\times \R^d.
\end{equation}
If 
\begin{equation}\label{eq:puissance}
 \begin{aligned}
  \sigma_0(d)& < \sigma<\frac{2}{d-2} \quad\(\text{with only
  }\sigma>\sigma_0(d)\text{ if }d\le 2\), \\
\text{ where
  }\sigma_0(d)&:=\frac{2-d+\sqrt{d^2+12d+4}}{4d} ,
\end{aligned} 
\end{equation}
then the scattering operator associated to \eqref{eq:nlsgen} is well
defined, from $\Sigma$ to $\Sigma$;
\cite{GV79Scatt,TsutsumiSigma,HT87} (see also 
\cite{CW92,NakanishiOzawa} where the case $\sigma=\sigma_0(d)$ is allowed).
Introduce
\begin{equation}
  \label{eq:lens}
  v(t,x)=\frac{1}{\(\cos t\)^{d/2}} u\(\tan t,\frac{x}{\cos t}\) e^{-i
  \frac{|x|^2}{2}\tan t},
\end{equation}
which is well defined for $|t|<\pi/2$, and has the same value as $u$
at time $t=0$. As noticed in
\cite{KavianWeissler,Rybin}  (see also \cite{CaM3AS,TaoLens}), $v$
solves, at least formally:
\begin{equation}
  \label{eq:nlsharmo}
  i\d_t v +\frac{1}{2}\Delta v = \frac{\lvert x\rvert^2}{2} v + \lvert \cos
  t\rvert^{d\sigma-2} \lvert v\rvert^{2\sigma}v.
\end{equation}
Note an important feature of the lens transform \eqref{eq:lens}: it
maps the line $\R_t$ for $u$, to the bounded interval
$]-\frac{\pi}{2},\frac{\pi}{2}[$ for $v$. Therefore, long time
properties for $u$ are equivalent to local in time properties for
$v$.

\subsection{The harmonic oscillator}
\label{sec:harmo}

Let 
\begin{equation*}
  H=-\frac{1}{2}\Delta +\frac{\lvert x\rvert^2}{2}, \text{ and }U_H(t)=e^{-itH}
\end{equation*}
denote the harmonic oscillator and its propagator. Recall some
well-known properties (see e.g. \cite{LandauQ}):
\begin{lemma}\label{lem:harmo}
  We have
  \begin{equation*}
    \sigma_{p}(H)=\left\{ \frac{d}{2}+k=:\l_k \ ;\ k\in \N\right\},
  \end{equation*}
and the associated eigenfunctions are given by (tensor products of)
Hermite functions, which form a basis of $L^2(\R^d)$. For $\ell\in
\N$, the
eigenfunctions associated to $\l_{2\ell}$ (resp. $\l_{2\ell+1}$) are
even (resp. odd). 
\end{lemma}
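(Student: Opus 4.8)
The statement is classical (as the reference to \cite{LandauQ} indicates), and the route I would follow is the standard one: separate variables to reduce everything to the one-dimensional harmonic oscillator $H_1=-\frac12\partial_x^2+\frac{x^2}{2}$ on $L^2(\R)$, treat the latter by the ladder-operator method, and then tensorize. The one step that is not purely formal, and which I would isolate, is the completeness of the Hermite basis (equivalently, the absence of continuous spectrum).

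First I would record that $H$ has compact resolvent: since the potential $|x|^2/2$ tends to $+\infty$ at infinity, the form domain of $H$ embeds compactly into $L^2(\R^d)$, so $\sigma(H)=\sigma_p(H)$ is a sequence of eigenvalues of finite multiplicity accumulating only at $+\infty$. This disposes of any continuous spectrum, and it remains only to exhibit the eigenvalues and eigenfunctions explicitly. Alternatively, one can bypass the resolvent argument and deduce discreteness directly from the completeness statement below.

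Next, in dimension one, I would introduce the annihilation and creation operators $a=\frac{1}{\sqrt2}(x+\partial_x)$ and $a^*=\frac{1}{\sqrt2}(x-\partial_x)$ on $\mathcal S(\R)$, so that $H_1=a^*a+\tfrac12$ and $[a,a^*]=\mathrm{Id}$. The equation $ah_0=0$ has, up to a scalar, the unique Schwartz solution $h_0(x)=\pi^{-1/4}e^{-x^2/2}$, and $H_1h_0=\tfrac12 h_0$; then $h_k:=(k!)^{-1/2}(a^*)^k h_0$ is, by the commutation relation, an orthonormal family with $H_1h_k=(k+\tfrac12)h_k$, and one checks that $h_k$ equals $e^{-x^2/2}$ times the Hermite polynomial of degree $k$. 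The point that deserves care is completeness of $\{h_k\}_{k\in\N}$ in $L^2(\R)$: if $f\perp h_k$ for all $k$, then $f$ is orthogonal to every polynomial times $e^{-x^2/2}$, so the entire function $z\mapsto\int_\R f(x)e^{-x^2/2}e^{zx}\,dx$ vanishes identically, forcing $f=0$. Hence $\{h_k\}$ is an orthonormal basis and $\sigma_p(H_1)=\{k+\tfrac12:k\in\N\}$.

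Finally, write $H=\sum_{j=1}^d H_{x_j}$ as a sum of commuting self-adjoint operators acting on separate variables; the products $h_\alpha(x):=\prod_{j=1}^d h_{\alpha_j}(x_j)$, $\alpha\in\N^d$, then form an orthonormal basis of $L^2(\R^d)$ consisting of eigenfunctions of $H$ with $Hh_\alpha=\big(\tfrac d2+|\alpha|\big)h_\alpha$. Therefore $\sigma_p(H)=\{\tfrac d2+k:k\in\N\}=\{\lambda_k\}$, with $\lambda_k$ of multiplicity $\#\{\alpha\in\N^d:|\alpha|=k\}$. For the parity claim, note that $a^*$ anticommutes with the reflection $x\mapsto-x$ while $h_0$ is even, so $h_k(-x)=(-1)^k h_k(x)$ and hence $h_\alpha(-x)=(-1)^{|\alpha|}h_\alpha(x)$; since every basis eigenfunction attached to $\lambda_{2\ell}$ (resp. $\lambda_{2\ell+1}$) is even (resp. odd), so is the whole eigenspace. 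As indicated, the only genuinely non-formal ingredient is the density of the Hermite functions, which is why I would prove it directly as above rather than quote it.
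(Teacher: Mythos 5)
Your proof is correct. The paper offers no argument of its own for this lemma --- it is stated as classical with a citation to Landau--Lifshitz --- and what you give is precisely the standard proof that citation points to (ladder operators in dimension one, completeness of the Hermite functions via the vanishing entire function $z\mapsto\int f(x)e^{-x^2/2}e^{zx}\,dx$, tensorization, and parity from the anticommutation of $a^*$ with the reflection), with the one genuinely non-formal step, density of the Hermite family, rightly singled out and proved.
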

In space dimension $d=1$, these eigenvalues are simple, and the
eigenvectors are given by Hermite functions  
$(\psi_k)_{k\in \N}$. For even indices, $\psi_{2\ell}$ is
even, and for odd indices, $\psi_{2\ell+1}$ is odd. Up to normalizing
constants, we have for instance
\begin{equation*}
  \psi_0(x)= e^{-x^2/2}\quad ;\quad \psi_1(x)= xe^{-x^2/2}.
\end{equation*}
In higher dimensions, one considers tensor products of one-dimensional
eigenfunctions; the eigenvalues $\l_k>d/2$ are no longer simple. We
note the identity
\begin{equation}\label{eq:maslov}
  U_H\(t+\pi\)\psi(x)=e^{-id\pi/2} U_H(t)\psi(-x),\quad \forall t\in \R.
\end{equation}

Finally, as a direct consequence of Mehler's formula, local in time
Strichartz estimates are available for $U_H$ (see
e.g. \cite{CaAHP,CazCourant}). Notice that since $H$ possesses
eigenvalues, global in time Strichartz estimates fails for $U_H$. 
 \begin{lemma}\label{lem:strichartz}
  Let $d\ge 1$. A pair $(p,q)$ is admissible provided that 
  \begin{equation*}
\frac{2}{p}+\frac{d}{q}=\frac{d}{2},\quad p\ge 2, \quad
(p,q,d)\neq(2,\infty,2). 
\end{equation*}
Consider a \emph{finite} time interval $I$. \\
$1.$ For all admissible pair $(p,q)$, there exists $C=C(p,|I|)$ such
that
\begin{equation*}
  \left\lVert U_H(\cdot)\phi\right\rVert_{L^p(I;L^q(\R^d))}\le C
  \|\phi\|_{L^2},\quad \forall \phi\in L^2(\R^d). 
\end{equation*}
$2.$ Define the retarded operator in defined by
\begin{equation*}
  R(F)(t,x) = \int_{I\cap \{s\le t\}} U_H(t-s)F(s,x)ds. 
\end{equation*}
For all admissible pairs $(p_1,q_1)$, $(p_2,q_2)$, there
exists $C=C(p_1,p_2,|I|)$ with
\begin{equation*}
  \left\lVert R(F)\right\rVert_{L^{p_1}(I;L^{q_1}(\R^d))}\le C
  \|F\|_{L^{p_2'}(I;L^{q_2'}(\R^d))},\quad \forall F\in L^{p_2'}(I;L^{q_2'}(\R^d)).
\end{equation*}
\end{lemma}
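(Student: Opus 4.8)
The plan is to extract a short‑time dispersive estimate for $U_H$ from Mehler's formula, feed it into the abstract $TT^*$/Keel--Tao machinery to obtain Strichartz estimates on short time intervals, and then patch these local estimates together over a finite partition of $I$.

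First I would recall Mehler's formula: for $t\notin \pi\Z$ the propagator $U_H(t)=e^{-itH}$ has integral kernel
\[
 U_H(t)(x,y)=\frac{1}{(2\pi i\sin t)^{d/2}}\exp\!\left(\frac{i}{2\sin t}\big((|x|^2+|y|^2)\cos t-2x\cdot y\big)\right),
\]
so that $\Norm{U_H(t)}{L^1\to L^\infty}\le (2\pi|\sin t|)^{-d/2}$. Since $|\sin\tau|\ge \frac 2\pi |\tau|$ for $|\tau|\le \pi/2$, this yields the free‑Schr\"odinger‑type bound $\Norm{U_H(\tau)\phi}{L^\infty(\R^d)}\le C|\tau|^{-d/2}\Norm{\phi}{L^1(\R^d)}$ for such $\tau$, while $\Norm{U_H(\tau)\phi}{L^2(\R^d)}=\Norm{\phi}{L^2(\R^d)}$ because $H$ is self‑adjoint. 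Together with the group identity $U_H(\tau_1)U_H(\tau_2)^*=U_H(\tau_1-\tau_2)$ (valid since $U_H$ is a unitary group, $U_H(s)^*=U_H(-s)$), these are exactly the hypotheses of the Keel--Tao theorem with dispersion exponent $d/2$; the admissibility condition in the statement, together with the exclusion of $(2,\infty,2)$, is precisely $d/2$‑admissibility in that theorem.

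Next I would cover $I$ by $N=N(|I|)$ consecutive subintervals $I_1,\dots,I_N$, each of length $\le \pi/2$. On each $I_m$ one has $|t-s|\le \pi/2$ for all $t,s\in I_m$, so the dispersive estimate above holds on $I_m\times I_m$ and Keel--Tao yields the homogeneous estimate $\Norm{U_H(\cdot)\phi}{L^p(I_m;L^q)}\le C\Norm{\phi}{L^2}$ and the retarded estimate localized to $I_m$, for all admissible pairs. Summing the $p$‑th powers over $m$ gives part $1$. For part $2$, I would decompose $R(F)$ according to which $I_{m'}$ carries the variable $s$: the diagonal contributions ($s$ and $t$ in the same $I_m$) are handled by the localized retarded estimate on $I_m$, while for an off‑diagonal contribution ($s\in I_{m'}$, $t\in I_m$ with $m'<m$) I would write $\int_{I_{m'}}U_H(t-s)F(s)\,ds=U_H(t)g_{m'}$ with $g_{m'}:=\int_{I_{m'}}U_H(-s)F(s)\,ds$, bound $\Norm{g_{m'}}{L^2}\le C\Norm{F}{L^{p_2'}(I_{m'};L^{q_2'})}$ by the dual homogeneous estimate, and then invoke the homogeneous estimate on $I_m$; summing over the finitely many pairs $(m,m')$ with a discrete H\"older inequality in the index $m$ closes the estimate.

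The only genuine subtlety is that the Mehler kernel degenerates as $t\to\pi\Z$, so the dispersive bound $|\tau|^{-d/2}$ is available only for short $|\tau|$: this is precisely why one must restrict to a finite interval $I$ (and accept a constant depending on $|I|$ through $N$), and why the partition‑and‑sum step — with the off‑diagonal bookkeeping in part $2$ handled through the group structure of $U_H$ — replaces a single global application of the Keel--Tao theorem. For $d\ge 3$ the endpoint pair additionally relies on the full strength of that theorem rather than on the elementary $TT^*$ argument.
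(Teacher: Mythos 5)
Your argument is correct and follows exactly the route the paper intends: the paper gives no written proof but presents the lemma as "a direct consequence of Mehler's formula" with references to \cite{CaAHP,CazCourant}, where precisely this combination of the short-time dispersive bound $\lvert\sin t\rvert^{-d/2}$, the Keel--Tao (or $TT^*$) machinery, and a partition of the finite interval $I$ into finitely many subintervals is carried out. Your off-diagonal bookkeeping for the retarded estimate, via the group identity $U_H(t-s)=U_H(t)U_H(-s)$ and the dual homogeneous estimate, is sound, so nothing essential is missing.
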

\subsection{A rotating point for $S$}
\label{sec:onefixed}

The following lemma is standard (see
\cite{TsutsumiSigma} or  \cite{Rauch91}):
\begin{lemma}\label{lem:tsutsumi}
  Let $f\in L^2(\R^d)$, and recall that
  $U_0(t)=e^{i\frac{t}{2}\Delta}$. 
  \begin{equation*}
    \left\lVert U_0(t) f - A(t)f\right\rVert_{L^2(\R^d)} \Tend t
    {\pm\infty} 0, \quad \text{where }A(t)f(x) = \frac{1}{(it)^{d/2}}\widehat
  f\(\frac{x}{t}\)e^{i\frac{\lvert x\rvert^2}{2t}},
  \end{equation*}
and the Fourier transform is normalized in \eqref{eq:fourier}. 
\end{lemma}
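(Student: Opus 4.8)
The plan is to use the explicit oscillatory-integral representation of the free propagator and to reduce the difference $U_0(t)f-A(t)f$ to a single Fourier-type operator applied to $f$, whose $L^2$ norm can then be computed exactly by Plancherel's theorem.

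First I would recall that, for $f$ in a dense class (say $f\in\mathcal S(\R^d)$, or $f\in L^1\cap L^2$), and $t\neq 0$,
\[
U_0(t)f(x)=\frac{1}{(2\pi i t)^{d/2}}\int_{\R^d}e^{i\frac{|x-y|^2}{2t}}f(y)\,dy .
\]
Expanding $|x-y|^2=|x|^2-2x\cdot y+|y|^2$ and factoring out the two quadratic phases gives
\[
U_0(t)f(x)=\frac{e^{i|x|^2/(2t)}}{(2\pi i t)^{d/2}}\int_{\R^d}e^{-ix\cdot y/t}\,e^{i|y|^2/(2t)}f(y)\,dy .
\]
Since $A(t)$ is exactly this expression with the factor $e^{i|y|^2/(2t)}$ replaced by $1$ (using the normalization \eqref{eq:fourier}, one checks $A(t)f(x)=(2\pi i t)^{-d/2}e^{i|x|^2/(2t)}\int e^{-ix\cdot y/t}f(y)\,dy$), we obtain
\[
U_0(t)f(x)-A(t)f(x)=\frac{e^{i|x|^2/(2t)}}{(2\pi i t)^{d/2}}\int_{\R^d}e^{-ix\cdot y/t}\bigl(e^{i|y|^2/(2t)}-1\bigr)f(y)\,dy .
\]

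The key observation is that the map $g\mapsto (2\pi i t)^{-d/2}e^{i|x|^2/(2t)}\int e^{-ix\cdot y/t}g(y)\,dy$ is, up to the unimodular factor $e^{i|x|^2/(2t)}$, the composition of the unitary Fourier transform \eqref{eq:fourier} with the $L^2$-preserving dilation $h\mapsto (it)^{-d/2}h(\cdot/t)$; hence it is an isometry of $L^2(\R^d)$. Applying it to $g(y)=\bigl(e^{i|y|^2/(2t)}-1\bigr)f(y)$ yields the exact identity
\[
\left\lVert U_0(t)f-A(t)f\right\rVert_{L^2(\R^d)}=\left\lVert\bigl(e^{i|\cdot|^2/(2t)}-1\bigr)f\right\rVert_{L^2(\R^d)} .
\]
Because $\lvert e^{i|y|^2/(2t)}-1\rvert\le 2$ for all $y$ and $t$, while $e^{i|y|^2/(2t)}-1\to 0$ pointwise as $t\to\pm\infty$, the right-hand side tends to $0$ by dominated convergence, which is the claim for $f$ in the dense class. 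To reach arbitrary $f\in L^2(\R^d)$, I would invoke that $U_0(t)$ is unitary and that the identity above shows $A(t)$ is an isometry of $L^2$ for every $t$; approximating $f$ in $L^2$ by smooth compactly supported data and using this uniform boundedness then gives the convergence in general. There is essentially no obstacle: the only point needing mild care is the justification of the kernel representation of $U_0(t)$ on $L^2$ data, which is exactly what this density argument provides.
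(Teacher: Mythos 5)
Your proof is correct and follows essentially the same route as the paper: you rewrite the free propagator as (a unimodular factor times a dilated Fourier transform) applied to $e^{i|y|^2/(2t)}f(y)$, use the $L^2$-isometry of that outer operator to reduce the claim to $\lVert(e^{i|\cdot|^2/(2t)}-1)f\rVert_{L^2}\to 0$, and conclude by dominated convergence — exactly the paper's factorization $U_0(t)=M_tD_t\mathcal F M_t$, $A(t)=M_tD_t\mathcal F$. The density step you add to justify the kernel formula is harmless (one can even apply dominated convergence directly to any $f\in L^2$ once the identity is extended by boundedness), so nothing further is needed.
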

\begin{proof}
  From the explicit formula
  \begin{equation*}
    U_0(t)f(x) = \frac{1}{(2i\pi t)^{d/2}}\int_{\R^d}e^{i\frac{\lvert
    x-y\rvert^2}{2t}} f(y)dy,
  \end{equation*}
we have the following factorization:
\begin{equation*}
  U_0(t) = M_t D_t {\mathcal F} M_t,
\end{equation*}
where $M_t$ stands for the multiplication by the function $e^{i\frac{\lvert
    x\rvert^2}{2t}}$, ${\mathcal F}$ is the Fourier transform defined in
    \eqref{eq:fourier}, and $D_t$ is the dilation operator
    \begin{equation*}
      \(D_t f\)(x) = \frac{1}{(it)^{d/2}}f\(\frac{x}{t}\). 
    \end{equation*}
The lemma thus reads: $U_0(t)- M_t D_t {\mathcal F}\to 0$ strongly in
$L^2$, as $t\to
\pm \infty$. Since $M_t D_t {\mathcal F}$ is unitary on $L^2$, the lemma follows
from the strong limit in $L^2$, $M_t-{\rm Id}\to 0$, 
which stems from the Dominated Convergence Theorem.  
\end{proof}
\begin{lemma}\label{lem:waveharmo}
  Let $\sigma$ satisfy \eqref{eq:puissance}, and $u,v \in C(\R;\Sigma)$ solve
  \eqref{eq:nlsgen}  and 
  \eqref{eq:nlsharmo}, respectively, with $u_{\mid t=0}=v_{\mid
  t=0}=\phi$. Then
\begin{equation*}
 v\(-\frac{\pi}{2},x\)=e^{id\pi/4}{\mathcal F}\(
  W_-^{-1}\phi\)(-x) \quad ;\quad v\(\frac{\pi}{2},x\)= e^{-id\pi/4} {\mathcal F}\(
  W_+^{-1}\phi\)(x) .
\end{equation*}
\end{lemma}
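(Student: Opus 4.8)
The plan is to track the asymptotic behavior of the solution $u$ to \eqref{eq:nlsgen} as $t\to\pm\infty$ through the lens transform \eqref{eq:lens}, which sends $t=\pm\pi/2$ for $v$ to $t=\pm\infty$ for $u$. First I would recall that, since $u_{\mid t=0}=\phi\in\Sigma$ and $\sigma$ satisfies \eqref{eq:puissance}, the scattering theory from \cite{GV79Scatt,TsutsumiSigma,HT87} applies: there exist asymptotic states $u_\pm = W_\pm^{-1}\phi\in\Sigma$ with $\Norm{U_0(-t)u(t)-u_\pm}{L^2}\to 0$ as $t\to\pm\infty$, hence also $\Norm{u(t)-U_0(t)u_\pm}{L^2}\to 0$ by unitarity of $U_0(t)$.

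Next I would compute the limit of $v(t,x)$ as $t\to\pm\pi/2$ directly from \eqref{eq:lens}. Writing $s=\tan t$, so that $s\to\pm\infty$, and using $\cos t = \pm 1/\sqrt{1+s^2}$ near $t=\pm\pi/2$, the definition \eqref{eq:lens} becomes, up to unimodular constants, a rescaled-and-modulated copy of $u(s,\cdot)$; replacing $u(s,\cdot)$ by $U_0(s)u_\pm$ to leading order and then invoking Lemma~\ref{lem:tsutsumi} (which replaces $U_0(s)u_\pm$ by $A(s)u_\pm$, i.e. by a Fourier transform dressed with the free-dispersion phase $e^{i|x|^2/(2s)}$), the quadratic phases should combine so that the $s$-dependence disappears entirely in the limit and one is left with $\widehat{u_\pm}$ evaluated at $\pm x$ times a fixed constant. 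The key bookkeeping is: the factor $(\cos t)^{-d/2}$ from the lens transform against the $(is)^{-d/2}$ from $A(s)$ produces the constant $e^{\pm id\pi/4}$; the dilation $x/(s\cos t)$ collapses to $\pm x$ since $s\cos t\to\pm 1$; and the two quadratic phases $-\tfrac{|x|^2}{2}\tan t$ (from \eqref{eq:lens}, with argument rescaled) and $e^{i|x|^2/(2s)}$ (from $A(s)$) cancel in the limit. Matching signs on the two ends $t\to\pm\pi/2$ gives the conjugation-by-parity discrepancy $\widehat{u_-}(-x)$ versus $\widehat{u_+}(x)$, consistent with the identity \eqref{eq:maslov} for $U_H$.

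The main obstacle is making the two successive approximations rigorous and uniform: one must justify that $\Norm{v(t)-(\text{leading term})}{L^2}\to 0$, which requires controlling $\Norm{u(s)-U_0(s)u_\pm}{L^2}$ (fine, it tends to $0$) composed with the $L^2$-isometric rescaling/modulation operators appearing in \eqref{eq:lens}, and simultaneously controlling the error $\Norm{U_0(s)u_\pm - A(s)u_\pm}{L^2}$ from Lemma~\ref{lem:tsutsumi}. Since all the operators involved (multiplication by unimodular phases, the dilation $D_s$, the Fourier transform, and the lens-transform rescaling) are unitary on $L^2$, these errors do not amplify, so the triangle inequality closes. I would also need to check that $v$ defined by \eqref{eq:lens} is genuinely the $C(\R;\Sigma)$-solution of \eqref{eq:nlsharmo} with the same Cauchy datum $\phi$ at $t=0$ — but this is exactly the (formal, then rigorous) content recalled in \S\ref{sec:lens}, so it may be invoked. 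The remaining point is purely computational: carefully propagating the signs of $\cos t$ through $t=\pm\pi/2$ to land on the stated unimodular prefactors $e^{\pm id\pi/4}$ and the parity flip $x\mapsto -x$ at the left endpoint.
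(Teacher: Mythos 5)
Your proposal follows essentially the same route as the paper's proof: express $v(t,\cdot)$ near $t=\pm\pi/2$ via the lens transform \eqref{eq:lens}, replace $u(\tan t)$ by $U_0(\tan t)u_\pm$ using asymptotic completeness, invoke Lemma~\ref{lem:tsutsumi} to pass to $A(\tan t)u_\pm$, and check that the combined quadratic phase $\frac{|x|^2}{2}\cot t$ vanishes in the limit while the prefactors and the dilation $x/\sin t$ produce $e^{\mp id\pi/4}$ and the parity flip at the left endpoint, all errors being harmless in $L^2$ by unitarity. The bookkeeping you describe matches the paper's computation, so the proposal is correct and not materially different.
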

\begin{remark}
  In the linear case, the same result holds when $W_\pm$ are replaced
  by $\rm Id$: see \eqref{eq:maslov}. The $-d\pi/2$ phase shift
  between the two instants $\pm\pi/2$ corresponds to the Maslov index,
  and the symmetry with respect to the origin accounts for the fact
  that the harmonic oscillator rotates the phase space with angular
  velocity equal to one.  
\end{remark}
\begin{remark}
  It would suffice to consider $v\in
  C([-\pi/2,\pi/2];L^2(\R^d))$, and $u\in C(\R;L^2(\R^d))$ which has
  asymptotic states in $L^2(\R^d)$. This is guaranteed if we assume
  further spatial regularity as above. In the case $\sigma=2/d$, we might
  also consider either data with small $L^2$ norm \cite{CW89}, or radially
  symmetric $L^2$ functions when $d\ge 2$
  \cite{KTV,TVZradial}. However, for the construction of periodic 
  solutions to \eqref{eq:nlsharmo}, we take advantage of
  properties such as the compactness of the embedding
  $\Sigma\hookrightarrow L^2\cap L^{2\sigma+2}$. 
\end{remark}
\begin{proof}
  We show:
\begin{equation*}
\left\lVert v(t)- e^{-id\pi/4} {\mathcal F}\(
  W_+^{-1}\phi\)\right\rVert_{L^2(\R^d)}\to 0 \text{ as
  }t\mathop{\longrightarrow}\limits_{<}  \frac{\pi}{2}.
\end{equation*}
By  \eqref{eq:lens} and asymptotic completeness for \eqref{eq:nls}, we
  have, in $L^2(\R^d)$:
  \begin{align*}
    v(t,x) =&\frac{1}{\(\cos t\)^{d/2}}e^{-i
  \frac{|x|^2}{2}\tan t} u\(\tan t,\frac{x}{\cos t}\) \\
\Eq t {\frac{\pi}{2}^-}&\frac{1}{\(\cos t\)^{d/2}}  e^{-i
  \frac{|x|^2}{2}\tan t} \(U_0(\tan t) u_+\)\(\frac{x}{\cos t}\),
  \end{align*}
where $u_+=W_+^{-1}\phi$. By Lemma~\ref{lem:tsutsumi}, we infer
\begin{equation*}
  v(t,x)\Eq t {\frac{\pi}{2}^-} \frac{1}{\(\cos t\)^{d/2}}  e^{-i
  \frac{|x|^2}{2}\tan t} e^{i\left\lvert \frac{x}{\cos
  t}\right\rvert^2 \frac{1}{2\tan t}} \frac{1}{(i\tan t)^{d/2}}
  \widehat u_+ \( \frac{x}{\tan t \cos t}\). 
\end{equation*}
The last quantity is equal to
\begin{equation*}
  \frac{e^{-id\pi/4}}{(\sin t)^{d/2}} \widehat u_+ \( \frac{x}{\sin
  t}\)e^{i\frac{|x|^2}{2}\( \frac{1}{\cos t \sin t}-\frac{\sin t}{\cos t}\)},
\end{equation*}
which converges to $e^{-id\pi/4} \widehat u_+(\cdot)$ in $L^2(\R^d)$ as $t\to
\pi/2$. 

The case $t\to -\pi/2$ is similar, up to a symmetry with
respect to the origin, since $\sin (-\pi/2)=-1$. 
\end{proof}
Denote $u_-=W_-^{-1}\phi$. We have therefore
\begin{equation*}
v\(-\frac{\pi}{2},x\) = e^{+id\pi/4} {\mathcal F}\(
  u_-\)(-x)  \quad ;\quad v\(\frac{\pi}{2},x\)= e^{-id\pi/4}{\mathcal F}\(Su_-\)(x). 
\end{equation*}
\begin{concl}
If the solution $v$ to
\eqref{eq:nlsharmo} with $v_{\mid t=0}=\phi$ satisfies
\begin{equation}\label{eq:condper}
  v\(\frac{\pi}{2},x\)=e^{-id\pi/2+i\theta}v\(-\frac{\pi}{2},-x\),
\end{equation}
then 
$u_-=W_-^{-1}\phi$ verifies $S(u_-)=e^{i\theta}u_-$. 
\end{concl}

\subsection{More rotating points?}
\label{sec:more}

In the case $\sigma=2/d$, the nonlinearity in \eqref{eq:nlsharmo} is
autonomous: we may apply the lens transform back and forth, and change
the time origin. The following result is then straightforward:
\begin{proposition}\label{prop:more}
  Let $d\ge 1$ and $\sigma=2/d$. Let $v\in C(\R;\Sigma)$ solve
  \eqref{eq:nlsharmo}, and such that
  \begin{equation*}
    v(t+\pi,x) = e^{-id\pi/2+i\theta}v(t,-x),\quad \forall (t,x)\in
    \R\times \R^d. 
  \end{equation*}
Then for all $t\in \R$, $v(t,\cdot)\in\Sigma$ is such that $u_-^t=
W_-^{-1} v(t)$ satisfies
\begin{equation*}
  S\(u_-^t\)=e^{i\theta}u_-^t.
\end{equation*}
\end{proposition}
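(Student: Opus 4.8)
The plan is to exploit that for $\sigma = 2/d$ the nonlinearity in \eqref{eq:nlsharmo} is autonomous, so the equation is invariant under time translation, and to combine this with the Conclusion of \S\ref{sec:onefixed} applied at each time $t$. Fix $v\in C(\R;\Sigma)$ solving \eqref{eq:nlsharmo} with $v(t+\pi,x)=e^{-id\pi/2+i\theta}v(t,-x)$ for all $(t,x)$. For a fixed $t_0\in\R$, set $\widetilde v(s,x):=v(s+t_0,x)$. Since the nonlinearity $|\cos(s+t_0)|^{d\sigma-2}|v|^{2\sigma}v = |v|^{4/d}v$ does not depend on time when $\sigma=2/d$, the function $\widetilde v$ again solves \eqref{eq:nlsharmo}, now with $\widetilde v_{\mid s=0}=v(t_0)=:\phi_{t_0}\in\Sigma$.

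Next I would check that $\widetilde v$ satisfies the periodicity hypothesis \eqref{eq:condper}: indeed
\begin{equation*}
  \widetilde v\!\left(\tfrac{\pi}{2},x\right)=v\!\left(t_0+\tfrac{\pi}{2},x\right)
  =v\!\left(\left(t_0-\tfrac{\pi}{2}\right)+\pi,x\right)
  =e^{-id\pi/2+i\theta}\,v\!\left(t_0-\tfrac{\pi}{2},-x\right)
  =e^{-id\pi/2+i\theta}\,\widetilde v\!\left(-\tfrac{\pi}{2},-x\right),
\end{equation*}
where the third equality uses the assumed relation with $t=t_0-\pi/2$. Thus $\widetilde v$ is a solution of \eqref{eq:nlsharmo} on $[-\pi/2,\pi/2]$ with Cauchy data $\phi_{t_0}$ at $s=0$, meeting exactly the condition of the Conclusion. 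Applying that Conclusion to $\widetilde v$ yields that $u_-^{t_0}:=W_-^{-1}\phi_{t_0}=W_-^{-1}v(t_0)$ satisfies $S(u_-^{t_0})=e^{i\theta}u_-^{t_0}$.

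Finally, since $t_0\in\R$ was arbitrary, this holds for every $t$, which is the assertion. The only points requiring a word of care are that $v(t)\in\Sigma$ for each $t$ (immediate from $v\in C(\R;\Sigma)$, which is what allows us to apply Lemma~\ref{lem:waveharmo} and hence the Conclusion to each shifted profile) and that the time-shift genuinely preserves \eqref{eq:nlsharmo} — this is precisely where autonomy of the nonlinearity, i.e. $\sigma=2/d$, is used and is the substantive (though short) step; for $\sigma\neq 2/d$ the factor $|\cos t|^{d\sigma-2}$ breaks translation invariance and the argument collapses.
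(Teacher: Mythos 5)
Your argument is correct and is exactly the one the paper intends: the paper merely remarks that for $\sigma=2/d$ the equation is autonomous so one may ``change the time origin,'' and your time-shift $\widetilde v(s,x)=v(s+t_0,x)$ together with the verification of \eqref{eq:condper} at $t=t_0-\pi/2$ and the Conclusion of \S\ref{sec:onefixed} is precisely that straightforward argument, spelled out. No gaps.
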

\begin{remark}
  Due to the gauge invariance of \eqref{eq:nls}, $S$ is also gauge
  invariant -- see \eqref{eq:invariant} -- and the above result may be
  relevant only on time 
  intervals of length (at most) $\pi$. 
\end{remark}

\subsection{The focusing case}
\label{sec:foc1}
 Consider the equation:
\begin{equation}\label{eq:foc}
  i\d_t  v +\frac{1}{2}\Delta v = \frac{\lvert x\rvert^2}{2} v -
  \lvert v\rvert^{4/d}v .
\end{equation}
For initial data in $\Sigma$, the existence of a unique solution
locally in time is well-known (see e.g. \cite{CazCourant}). Rather
than the possibility of finite time blow-up, our interest is:
\begin{lemma}\label{lem:deffoc}
  Suppose that \eqref{eq:foc} has a solution $v\in
  C([-\pi/2,\pi/2];\Sigma)$. Then $u$, defined by
\begin{equation}\label{eq:inverse}
u(t,x)=\frac{1}{\(1+ t^2 \)^{d/4}}e^{i
\frac{
t}{1+t^2}\frac{|x|^2}{2} }v\( \operatorname{arctan} t,
\frac{x}{\sqrt{ 1 + t^2}}\)
\end{equation}
solves \eqref{eq:nlsfoc}. It satisfies $u\in C(\R;\Sigma)$, and has
asymptotic states in $\Sigma$, given by:
\begin{equation*}
  u_\pm(x) = e^{\pm id\pi/4}{\mathcal F}^{-1} v\(\pm \frac{\pi}{2}\)(\pm x). 
\end{equation*}
\end{lemma}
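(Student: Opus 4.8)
The plan is to verify Lemma~\ref{lem:deffoc} by a direct, careful computation mirroring the forward lens transform of \S\ref{sec:lens}, but now run in reverse. First I would observe that the map $t\mapsto\operatorname{arctan}t$ is a diffeomorphism from $\R$ onto $]-\pi/2,\pi/2[$, so that the formula \eqref{eq:inverse} makes sense for all $t\in\R$ whenever $v\in C([-\pi/2,\pi/2];\Sigma)$. Setting $s=\operatorname{arctan}t$, so that $\tan s=t$, $\cos s=(1+t^2)^{-1/2}$, one sees that \eqref{eq:inverse} is exactly the inverse of the substitution \eqref{eq:lens} with $\sigma=2/d$ (note $d\sigma-2=0$, so the time-dependent factor $|\cos t|^{d\sigma-2}$ in \eqref{eq:nlsharmo} is identically $1$ and \eqref{eq:nlsharmo} becomes \eqref{eq:foc}). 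Hence the formal computation showing that \eqref{eq:lens} sends \eqref{eq:nlsgen} to \eqref{eq:nlsharmo} applies verbatim in the other direction and yields that $u$ defined by \eqref{eq:inverse} solves \eqref{eq:nlsfoc}, at least formally.

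Next I would address regularity: since $v\in C([-\pi/2,\pi/2];\Sigma)$ and the coefficients $(1+t^2)^{-d/4}$, $e^{it|x|^2/(2(1+t^2))}$ and $x/\sqrt{1+t^2}$ are smooth in $t$ with values in multipliers that preserve $\Sigma$ locally (the quadratic phase $e^{it|x|^2/(2(1+t^2))}$ maps $\Sigma$ to $\Sigma$ because differentiating it brings down a factor $x$, controlled by the $\mathcal F(H^1)$ part of $\Sigma$), one gets $u\in C(\R;\Sigma)$. Because $v$ solves \eqref{eq:foc} with $v\in C([-\pi/2,\pi/2];\Sigma)$, the Duhamel formula for \eqref{eq:foc} combined with the local Strichartz estimates of Lemma~\ref{lem:strichartz} on the compact interval $[-\pi/2,\pi/2]$ gives the needed space-time integrability to justify that $u$ is a genuine (not merely formal) solution of \eqref{eq:nlsfoc} on all of $\R$; this is where one transports the Duhamel identity through the change of variables. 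The point is that long-time control for $u$ corresponds to control for $v$ on the fixed compact interval, so no global theory for \eqref{eq:nlsfoc} is needed.

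Finally, for the asymptotic states, I would compute $U_0(-t)u(t)$ as $t\to+\infty$ and show convergence in $\Sigma$. Using the factorization $U_0(t)=M_tD_t{\mathcal F}M_t$ from the proof of Lemma~\ref{lem:tsutsumi}, one has $U_0(-t)={\mathcal F}^{-1}M_{-t}D_t^{-1}{\mathcal F}M_{-t}$ up to the usual constants, and inserting \eqref{eq:inverse} one finds, after simplifying the quadratic phases (exactly the cancellation already performed in the proof of Lemma~\ref{lem:waveharmo}, with $\tan s\to\infty$ as $s\to\pi/2^-$), that $U_0(-t)u(t)$ tends to $e^{id\pi/4}{\mathcal F}^{-1}v(\pi/2)(\cdot)$ with a residual pure phase $M_{-t}-\mathrm{Id}\to0$ strongly in $L^2$, and in $\Sigma$ once one uses that $v(\pm\pi/2)\in\Sigma$. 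A symmetric computation at $t\to-\infty$, where $\tan s\to-\infty$ so that a reflection $x\mapsto -x$ appears and the Maslov phase becomes $e^{-id\pi/4}$, gives $u_-(x)=e^{-id\pi/4}{\mathcal F}^{-1}v(-\pi/2)(-x)$, matching the claimed formula $u_\pm(x)=e^{\pm id\pi/4}{\mathcal F}^{-1}v(\pm\pi/2)(\pm x)$. The main obstacle I anticipate is not any single estimate but the bookkeeping of the quadratic phases and dilation factors through the change of variables together with the $\Sigma$-valued (rather than merely $L^2$-valued) continuity and convergence claims; the algebra is the same as in Lemmas~\ref{lem:tsutsumi} and~\ref{lem:waveharmo}, so I would organize the proof to reuse those computations rather than redo them.
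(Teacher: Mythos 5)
Your proposal is correct and follows essentially the paper's own route: the facts that $u$ solves \eqref{eq:nlsfoc} and lies in $C(\R;\Sigma)$ are treated as immediate consequences of inverting the lens transform (with $\sigma=2/d$ so the factor $|\cos t|^{d\sigma-2}$ disappears), and the asymptotic states are obtained by expanding $U_0(-t)u(t)$ and tracking the dilations, quadratic phases and the $e^{\pm id\pi/4}$ Maslov factors, which is exactly the paper's (admittedly formal) computation. The only slip is notational: the factorization should read $U_0(-t)=M_{-t}D_{-t}{\mathcal F}M_{-t}$, equivalently $M_{-t}{\mathcal F}^{-1}D_t^{-1}M_{-t}$, rather than ${\mathcal F}^{-1}M_{-t}D_t^{-1}{\mathcal F}M_{-t}$, but this does not affect the argument since you arrive at the correct limits $e^{\pm id\pi/4}{\mathcal F}^{-1}v(\pm\pi/2)(\pm x)$ with the reflection appearing as $t\to-\infty$.
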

\begin{proof}
  We have immediately $u\in C(\R;\Sigma)$, and the fact that it solves
  \eqref{eq:nlsfoc}. To see that $u$ has asymptotic states, given by
  the above formula, write, for large $|t|$:
  \begin{align*}
    & U_0(-t)u(t,x)= \frac{1}{(-2i\pi t)^{d/2}}\int
    e^{-i\frac{|x-y|^2}{2t}} u(t,y)dy\\
&=\frac{1}{(-2i\pi t)^{d/2}}\int \frac{1}{\(1+ t^2 \)^{d/4}}
   e^{i
\frac{
t}{1+t^2}\frac{|y|^2}{2} } e^{-i\frac{|x-y|^2}{2t}} v\(
    \operatorname{arctan} t, \frac{y}{\sqrt{ 1 + t^2}}\)dy\\
&\approx  \frac{1}{(-2i\pi
    t)^{d/2}}\frac{1}{|t|^{d/2}}e^{-i\frac{|x|^2}{2t}} \int
    e^{i\frac{x\cdot y}{t}}e^{i\(-\frac{1}{t}+
    \frac{t}{1+t^2}\)\frac{|y|^2}{2}} v\(
    \operatorname{arctan} t, \frac{y}{\sqrt{ 1 + t^2}}\)dy\\
&\approx  \frac{e^{\pm
    id\pi/4}}{(2\pi)^{d/2}}\frac{1}{|t|^{d}}e^{-i\frac{|x|^2}{2t}} \int 
    e^{i\frac{x\cdot y}{t}}e^{-\frac{i}{t(1+t^2)}\frac{|y|^2}{2}} v\(
    \operatorname{arctan} t, \frac{y}{|t|}\)dy\\
&\approx  e^{\pm
    id\pi/4} e^{-i\frac{|x|^2}{2t}} {\mathcal F}^{-1} v\(
    \operatorname{arctan} t, \pm x\)\approx e^{\pm
    id\pi/4}  {\mathcal F}^{-1} v\(
    \pm\frac{\pi}{2}, \pm x\).
  \end{align*}
We have presented the computations in a formal way. We leave their
easy justification to the reader. 
\end{proof}

\section{Construction of periodic solutions}
\label{sec:periodic}

We construct a solution of the form 
$$v(t,x)=e^{-i\nu t}\psi(x).$$
In the defocusing case, $\psi$ must solve
\begin{equation}
  \label{eq:stat}
  \nu\psi = H\psi + |\psi|^{4/d}\psi.
\end{equation}
In the focusing case, it must solve
\begin{equation}
  \label{eq:statfoc}
 \( H-\nu\) \psi = \lvert \psi\rvert^{4/d}\psi.
\end{equation}
We have:
\begin{proposition}\label{prop:cri}
  Let $d\ge 1$.\\
$1.$  If $\nu>d/2$, then there exists an even function $\psi\in
  \Sigma\setminus\{0\}$ solving \eqref{eq:stat}.\\
$2.$ If $\nu<d/2$, then there exists an even function $\psi\in
  \Sigma\setminus\{0\}$ solving \eqref{eq:statfoc}.
\end{proposition}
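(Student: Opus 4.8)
The plan is to obtain the profiles $\psi$ as solutions of a constrained minimization problem, exploiting the compactness of the embedding $\Sigma\hookrightarrow L^2\cap L^{2+4/d}$.

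\medskip

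\emph{Part 1 (defocusing case, $\nu>d/2$).} Equation \eqref{eq:stat} is the Euler--Lagrange equation of the functional
\begin{equation*}
  E_\nu(\psi)=\frac12\|\nabla\psi\|_{L^2}^2+\frac12\big\||x|\psi\big\|_{L^2}^2-\frac\nu2\|\psi\|_{L^2}^2+\frac{d}{2d+4}\|\psi\|_{L^{2+4/d}}^{2+4/d}.
\end{equation*}
Since $\nu>d/2=\inf\sigma_p(H)$, the quadratic part $\tfrac12\langle(H-\nu)\psi,\psi\rangle$ is \emph{not} positive, so I would not minimize $E_\nu$ directly. Instead I would minimize $J(\psi):=\|\psi\|_{L^{2+4/d}}^{2+4/d}$ (or equivalently maximize $\|\psi\|_{L^{2+4/d}}$) over the constraint set $\{\psi\in\Sigma: \psi \text{ even},\ \langle(H-\nu)\psi,\psi\rangle=-1\}$ (or $=+1$; one sign will be the feasible one, determined by the fact that $H-\nu$ restricted to even functions has finitely many negative eigenvalues). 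Take a minimizing sequence $(\psi_n)$; it is bounded in $\Sigma$ because the quadratic form controls $\|\nabla\psi\|_{L^2}^2+\||x|\psi\|_{L^2}^2$ up to a finite-dimensional (hence $\Sigma$-bounded, via the $L^2$ bound coming from the constraint together with the $L^{2+4/d}$ minimality) correction. Pass to a weakly convergent subsequence; by the compact embedding $\Sigma\hookrightarrow L^2\cap L^{2+4/d}$ the limit $\psi$ satisfies the constraint (in particular $\psi\neq0$, so the constraint is not lost in the limit) and is a minimizer by weak lower semicontinuity of the quadratic form. A Lagrange multiplier argument then gives $H\psi-\nu\psi=\mu|\psi|^{4/d}\psi$ for some $\mu$; testing against $\psi$ and using the constraint shows $\mu<0$, and rescaling $\psi\mapsto c\psi$ with $c>0$ absorbs $|\mu|$, yielding a genuine solution of \eqref{eq:stat}. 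Evenness is preserved throughout since the constraint set and functional are invariant under $x\mapsto -x$ and the even functions form a closed subspace.

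\medskip

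\emph{Part 2 (focusing case, $\nu<d/2$).} Here $H-\nu\ge (d/2-\nu)>0$ is coercive on $\Sigma$, so \eqref{eq:statfoc} is more classical. I would minimize $Q_\nu(\psi):=\langle(H-\nu)\psi,\psi\rangle=\|\nabla\psi\|_{L^2}^2+\||x|\psi\|_{L^2}^2-\nu\|\psi\|_{L^2}^2$ over $\{\psi\in\Sigma:\psi\text{ even},\ \|\psi\|_{L^{2+4/d}}=1\}$. Coercivity gives $\Sigma$-boundedness of any minimizing sequence; the compact embedding $\Sigma\hookrightarrow L^{2+4/d}$ keeps the constraint in the weak limit and lower semicontinuity of $Q_\nu$ yields a minimizer $\psi\neq0$; the Lagrange equation is $H\psi-\nu\psi=\mu|\psi|^{4/d}\psi$ with $\mu=Q_\nu(\psi)>0$, and rescaling gives \eqref{eq:statfoc}. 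Evenness is handled exactly as before.

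\medskip

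\emph{Expected main obstacle.} The delicate point is the defocusing case when $\nu>d/2$: the natural energy functional is indefinite, so the argument has to be organized around the indefinite quadratic constraint $\langle(H-\nu)\psi,\psi\rangle=\mp1$, and one must check that (i) the constraint set is nonempty and (ii) a minimizing sequence for $\|\psi\|_{L^{2+4/d}}$ stays bounded in $\Sigma$ — i.e. that the "negative directions" of $H-\nu$ (a finite-dimensional space of Hermite functions, by Lemma~\ref{lem:harmo}) cannot run off to infinity. Point (i) follows by plugging in a suitable even Hermite eigenfunction with eigenvalue $<\nu$ if $\nu$ is large enough, or more generally a truncation/combination giving negative quadratic form. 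Point (ii) is the heart of the matter: one uses that along the minimizing sequence $\|\psi_n\|_{L^{2+4/d}}$ is bounded, which via the Gagliardo--Nirenberg inequality \eqref{eq:GNopt} bounds $\|\psi_n\|_{L^2}^{?}\|\nabla\psi_n\|_{L^2}^{?}$ from below only, so instead one argues that if $\|\psi_n\|_\Sigma\to\infty$ then, after normalizing, the normalized sequence concentrates on the (finite-dimensional, hence $L^{2+4/d}$-compact) negative eigenspace, forcing $\langle(H-\nu)\cdot,\cdot\rangle\to-\infty$ rather than staying equal to $-1$ — a contradiction. Once boundedness is secured, the compactness of $\Sigma\hookrightarrow L^2\cap L^{2+4/d}$ does the rest in both cases, and the construction of infinitely many profiles $\phi_j$ in Remark~\ref{rem:forme} comes from letting $\nu=d/2+2j\pm\theta/\pi$ range over the admissible arithmetic progression.
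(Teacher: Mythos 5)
Your Part 2 is essentially the paper's proof: the paper minimizes $I(\psi)=\tfrac12\langle (H-\nu)\psi,\psi\rangle$ over $M=\{\psi\in\Sigma\ \text{radial}:\ \tfrac{1}{1+2/d}\|\psi\|_{L^{2+4/d}}^{2+4/d}=1\}$, and for $\nu<d/2$ this is exactly your coercive minimization, up to normalizing constants. In Part 1 you genuinely deviate: the paper keeps the same functional and the same $L^{2+4/d}$-constraint also for $\nu>d/2$, proves $-\infty<\delta=\inf_M I<0$ (negativity by testing with the Gaussian ground state, finiteness by a dichotomy based on the uncertainty principle, normalization and the compact embedding), and then the Lagrange multiplier comes out negative and a rescaling gives \eqref{eq:stat}; you instead freeze the indefinite quadratic form at $-1$ and minimize the $L^{2+4/d}$-norm, taming the indefiniteness through the finite-dimensional negative spectral subspace of $H-\nu$ (Lemma~\ref{lem:harmo}). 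This dual formulation can be made to work, and you correctly identify the key mechanism (finite-dimensionality of the negative directions). Incidentally, the hedge ``if $\nu$ is large enough'' for nonemptiness is unnecessary: the Gaussian ground state, with eigenvalue $d/2<\nu$, always gives a negative form -- it is the very test function the paper uses to see $\delta<0$.

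As written, though, Part 1 has a real gap at the limit step. The constraint $\langle(H-\nu)\psi,\psi\rangle=-1$ contains $\tfrac12\|\nabla\psi\|_{L^2}^2+\tfrac12\|x\psi\|_{L^2}^2$, which is only weakly lower semicontinuous; the compact embedding $\Sigma\hookrightarrow L^2\cap L^{2+4/d}$ gives strong convergence of the $L^2$ and $L^{2+4/d}$ parts only. Hence the weak limit of your minimizing sequence a priori satisfies $\langle(H-\nu)\psi,\psi\rangle\le-1$, possibly strictly, so the sentence ``by the compact embedding the limit satisfies the constraint'' is not justified (and ``minimizer by weak lower semicontinuity of the quadratic form'' has the roles reversed: your objective $\|\psi\|_{L^{2+4/d}}^{2+4/d}$ converges strongly, it is the constraint that can drop). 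The standard repair uses homogeneity: first check that $\inf J>0$ -- if $\|\psi_n\|_{L^{2+4/d}}\to0$, then the $L^2$-projection onto the finite-dimensional negative subspace tends to $0$ (this projection is bounded from $L^{2+4/d}$ to $L^2$ since Hermite functions are Schwartz), while the positive part of the form is nonnegative, contradicting the value $-1$; then, if the limit satisfies $\langle(H-\nu)\psi,\psi\rangle=-a$ with $a>1$, the rescaled function $a^{-1/2}\psi$ is admissible with strictly smaller $J$ than $\inf J>0$, a contradiction, so $a=1$ and the infimum is attained by $\psi\neq0$. Note also that only the level $-1$ works (on the level set $+1$ the infimum of $J$ is $0$, approached along high even Hermite modes, and not attained). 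With these repairs your route closes; the paper's arrangement -- constraint on the compactly embedded norm, objective the quadratic form -- is designed precisely so that compactness preserves the constraint and lower semicontinuity applies to the objective, which is what makes its argument shorter.
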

Theorem~\ref{theo:main} follows from the first point, by considering
the family (for $\theta\in [0,2\pi[$)
\begin{equation*}
  (\nu_j)_{j\ge 1}=\Big\{\frac{d}{2}+2j-\frac{\theta}{\pi},\ j\in
  \N\setminus\{0\}\Big\}. 
\end{equation*}
The form of the corresponding
solution $u$ given in Remark~\ref{rem:forme} is straightforward, by
inverting the lens transform \eqref{eq:lens} (see \eqref{eq:inverse}).  
\smallbreak

To infer the existence part of Theorem~\ref{theo:foc}, we can consider
the family (for $\theta\in [0,2\pi[$)
\begin{equation*}
  (\nu_j)_{j\ge 1}=\left\{ \frac{d}{2}-2j-\frac{\theta}{\pi},\ j\in
  \N\setminus\{0\}\right\}. 
\end{equation*} 
\begin{remark}
  For such solutions, Proposition~\ref{prop:more} is
  irrelevant. Consider $e^{-i\nu_j  t}\psi(x)$ at two different
  times: this amounts to a multiplication by $e^{i\eta}$ for some
  $\eta\in \R$. As we have seen, the gauge invariance of
  \eqref{eq:nls} implies 
  that $S$ is also gauge invariant, and Proposition~\ref{prop:more}
  holds trivially for such solutions.
\end{remark}
\begin{remark}
  Proposition~\ref{prop:cri} remains valid if $|\psi|^{4/d}\psi$ is
  replaced with $|\psi|^{2\sigma}\psi$,
where the nonlinearity is $H^1$-subcritical, that
is, $\sigma<2/(d-2)$ when $d\ge 3$. However, since the nonlinearity in
\eqref{eq:nlsharmo} is autonomous if and only if $\sigma=2/d$, this is
the only case where it is reasonable to seek a solution to
\eqref{eq:nlsharmo} of the form $v(t,x)=e^{-i\nu t}\psi(x)$. 
\end{remark}
At least two proofs of Proposition~\ref{prop:cri} are available in the
literature: 
\begin{itemize}
\item In \cite{RW88}, the case of \eqref{eq:statfoc} is considered, by
  using bifurcation theory. As indicated there, the arguments
  presented in \cite{NirLN} make it possible to infer
  Proposition~\ref{prop:cri}. 
\item In \cite{KavianWeissler}, Proposition~\ref{prop:cri} is
  established up to the symmetry property (which could easily be
  incorporated): see \cite[Theorem~1.4]{KavianWeissler} for the first
  point, and \cite[Theorem~1.3]{KavianWeissler} for the second
  one. The proof there is based on the mountain pass lemma. 
\end{itemize}
Even though this result has been established elsewhere, we present a
another short, self-contained proof, for the sake of completeness. 
\begin{proof}[Proof of Proposition~\ref{prop:cri}]
   We proceed in the same spirit as in \cite{BL83a,CGM78}: let 
  \begin{align*}
    I(\psi)&= \frac{1}{2}\<H\psi,\psi\> -\frac{\nu}{2}\<\psi,\psi\>,\\
 M&=\left\{ \psi\in \Sigma,\ \psi(x)=\psi(|x|)\ ; \ 
    \frac{1}{1+2/d} \int_{\R^d}|\psi(x)|^{2+4/d}dx=1\right\}. 
  \end{align*}
We consider radially symmetric functions for simplicity; in
particular, these are even functions. 
The following lines essentially show that the negative part of $I$
can be controlled. 
Denote 
\begin{equation*}
  \delta =\inf_{\psi\in M}I(\psi).
\end{equation*}
{\bf First case: $\nu>d/2$.}\\
We show that  $0>\delta >-\infty$. To see that  $\delta<0$, 
consider $\psi(x)=ce^{-|x|^2/2}$, where $c$ 
is such that $\psi\in M$, and recall that $\psi$ is the unique
eigenfunction associated to $\l_0=d/2$: $H\psi=d/2\psi$.

Suppose that we could find 
  sequences in $M$ along which $I$ goes to
  $-\infty$. Let $(\psi_n)_{n\in \N}$ be such a sequence; necessarily,
  it is unbounded in $L^2(\R^d)$. We remark that $(x\psi_n)_{n\in \N}$
  and $(\nabla \psi_n)_{n\in \N}$ are also unbounded in $L^2(\R^d)$,
  with norms of the same order as $\|\psi_n\|_{L^2}$. To be more
  precise, we introduce a notation: let $(\alpha_n)_{n\in\N}$ and
  $(\beta_n)_{n\in\N}$ be two families of positive real numbers. 
\begin{itemize}
\item We write $\alpha_n \ll \beta_n$ if
$\displaystyle \limsup_{n\to +\infty}\alpha_n/\beta_n =0$.
\item We write $\alpha_n \lesssim \beta_n$ if 
$\displaystyle \limsup_{n\to +\infty}\alpha_n/\beta_n <\infty$.
\item We write $\alpha_n \approx \beta_n$ if $\alpha_n \lesssim
  \beta_n$ and $\beta_n \lesssim \alpha_n$. 
\end{itemize}
Let $\phi_n\in \Sigma$, with
 $\|\phi_n\|_{L^2}\to +\infty$. In general, up to extracting a
 subsequence, two possibilities can be distinguished: 
\begin{itemize}
\item $\|\nabla\phi_n\|_{L^2}\gg \|\phi_n\|_{L^2}$ and/or
  $\|x\phi_n\|_{L^2}\gg \|\phi_n\|_{L^2}$, 
\item Or $\|\nabla\phi_n\|_{L^2}\approx \|x\phi_n\|_{L^2}\approx
  \|\phi_n\|_{L^2}$. 
\end{itemize}
This stems from the uncertainty principle
\begin{equation*}
 \|\phi\|_{L^2}^2\le \frac{2}{d}\|\nabla\phi\|_{L^2} \|x\phi\|_{L^2}.
\end{equation*}
In our case, it is easy to see that the first possibility
leads to a contradiction, since $I(\psi_n)$ would be positive for $n$
sufficiently large. Consider the last possible case:
$\|\nabla\psi_n\|_{L^2}\approx 
  \|x\psi_n\|_{L^2}\approx \|\psi_n\|_{L^2}$. Introduce
\begin{equation*}
    \widetilde \psi_n = \frac{1}{\|\psi_n\|_{L^2}}\psi_n.
  \end{equation*}
This is a bounded sequence in $\Sigma$, whose $L^2$ norm is equal to
one. Up to extracting a subsequence, $\widetilde \psi_n$ converges weakly in
$\Sigma$. Since $\Sigma \hookrightarrow L^{2}(\R^d)$ is
compact, (a subsequence of) $\widetilde \psi_n$ converges strongly in
$L^2(\R^d)$, to some $\widetilde \psi \in  
\Sigma$ such that $\|\widetilde \psi\|_{L^2}=1$. Since the embedding
$\Sigma \hookrightarrow L^{2+4/d}(\R^d)$ is compact, $\widetilde
\psi_n \to \widetilde \psi$ strongly in $L^{2+4/d}(\R^d)$. We infer
\begin{equation*}
  \|\psi_n \|_{L^{2+4/d}} = \|\psi_n
  \|_{L^{2}} \|\widetilde \psi_n \|_{L^{2+4/d}}\approx \|\psi_n
  \|_{L^{2}}\to +\infty. 
\end{equation*}
Therefore, $\psi_n$ cannot remain in $M$, hence the finiteness of $\delta$.

Since the embedding $\Sigma \hookrightarrow L^p(\R^d)$ is compact
for $2\le p<2d/(d-2)$ ($2\le p\le \infty$ if $d=1$ and $2\le p<\infty$
if $d=2$), we infer that this infimum
is actually a minimum, attained by a non-trivial function $\psi\in
\Sigma$. Indeed, from what we have seen above, any minimizing sequence
is bounded in $L^2(\R^d)$, and therefore in $\Sigma$ since
$\delta<0$. The Lagrange multiplier $\mu$ associated to this
problem is such that 
\begin{equation*}
  H\psi -\nu \psi = \mu |\psi|^{4/d}\psi. 
\end{equation*}
The scalar product with $\psi$ yields $\mu <0$, since $\delta<0$:
\begin{equation*}
  \nu \psi =H\psi + |\mu| |\psi|^{4/d}\psi. 
\end{equation*}
The function $|\mu|^{d/4}\psi (\not =0)$ solves \eqref{eq:stat}. 
\smallbreak

\noindent {\bf Second case: $\nu<d/2$.}\\
 We show that $0<\delta<\infty$. The finiteness of $\delta$ is obvious,
and we recall that the uncertainty principle yields
\begin{equation}\label{eq:17h07}
  I(\psi)\ge \frac{1}{2}\(\frac{d}{2}-\nu\)\|\psi\|_{L^2}^2>0. 
\end{equation}
Assume that $\delta=0$: we can find a minimizing sequence $\psi_n\in M$
such that $\psi_n\to 0$ in $L^2$. Therefore, 
\begin{equation*}
  0\leftarrow I(\psi_n)= \frac{1}{2}\<H\psi_n,\psi_n\>+o(1),
\end{equation*}
and $\psi_n\to 0$ in $\Sigma$. This implies $\psi_n\to 0$ in
$L^{2+4/d}$: this contradicts $\psi_n\in M$, and so, $\delta>0$. We
see from \eqref{eq:17h07} that any minimizing
sequence is bounded in $L^2$, and thereby in $\Sigma$. Up to a
subsequence, such a sequence 
converges weakly in $\Sigma$, and strongly in $L^2\cap L^{2+4/d}$, to
a function $\psi\in M$ which verifies
\begin{equation*}
  \(H-\nu\)\psi=\mu |\psi|^{4/d}\psi.
\end{equation*}
Since $\delta>0$, taking the scalar product with $\psi$ in the above
equation shows that the Lagrange multiplier $\mu$ is positive. The
function $\mu^{d/4}\psi$ then solves \eqref{eq:statfoc}, hence the
proposition. 
\end{proof}

\section{End of the proof of Theorem~\ref{theo:foc}}
\label{sec:focusing}

Recall that for the existence part of  Theorem~\ref{theo:foc}, we
apply the second point of Proposition~\ref{prop:cri} with the family
(for $\theta\in [0,2\pi[$)
\begin{equation*}
  (\nu_j)_{j\ge 1}=\left\{ \frac{d}{2}-2j-\frac{\theta}{\pi},\ j\in
  \N\setminus\{0\}\right\}. 
\end{equation*}
Denote by $(\phi_j)_{j\ge 1}$ a corresponding family of even, nontrivial 
solutions to
\eqref{eq:statfoc}. The fact that this family is unbounded in
$H^1(\R^d)$ follows by taking the scalar product of \eqref{eq:statfoc}
with $\phi_j$, and invoking the Gagliardo--Nirenberg inequality
\eqref{eq:GNopt}: 
\begin{equation*}
  \<\(H-\nu_j\)\phi_j,\phi_j\>= \int_{\R^d}|\phi_j(x)|^{2+4/d}dx\le
  \frac{d+2}{2d\|Q\|_{L^2}^{4/d}}  
  \|\phi_j\|_{L^2}^{4/d}\|\nabla \phi_j\|_{L^2}^{2}. 
\end{equation*}
This reads:
\begin{equation*}
   \frac{d+2}{2d\|Q\|_{L^2}^{4/d}}  
  \|\phi_j\|_{L^2}^{4/d}\|\nabla \phi_j\|_{L^2}^{2}\ge \frac{1}{2}\|\nabla
  \phi_j\|_{L^2}^2 + 
  \frac{1}{2}\|x\phi_j\|_{L^2}^2 +
  \(2j-\frac{d}{2}+\frac{\theta}{\pi}\)\|\phi_j\|_{L^2}^2. 
\end{equation*}
We have directly, for $2j>d/2-\theta/\pi$, 
\begin{equation*}
  \|\phi_j\|_{L^2}^{4/d} >\frac{d}{d+2}\|Q\|_{L^2}^{4/d}.
\end{equation*}
Therefore,
\begin{equation*}
  \|\phi_j\|_{L^2}^{4/d}\|\nabla \phi_j\|_{L^2}^{2} \ge
  \frac{2d}{d+2}\|Q\|_{L^2}^{4/d} 
  \(2j-\frac{d}{2}+\frac{\theta}{\pi}\)\|\phi_j\|_{L^2}^2 \ge C
  \(2j-\frac{d}{2}+\frac{\theta}{\pi}\). 
\end{equation*}
The last inequality shows that $(\phi_j)_{j\ge 1}$ is unbounded in
$H^1(\R^d)$. The first inequality shows that if $d\ge 2$, then
\begin{equation*}
  \|\nabla \phi_j\|_{L^2}^{2} \gtrsim j. 
\end{equation*}
In view of Lemma~\ref{lem:deffoc}, the
following result completes the proof of Theorem~\ref{theo:foc}.
\begin{proposition}
  Let $\nu<d/2$, and $v(t,x) =e^{-i\nu t}\psi(x)$, where $\psi$ solves
  \eqref{eq:statfoc}. There exists $\varepsilon>0$ such that if
  $\phi\in\Sigma$ satisfies
  $\|v(-\pi/2,\cdot)-\phi\|_{L^2}<\varepsilon$, then the solution
  $\widetilde v$ to the initial value problem
  \begin{equation}
    \label{eq:vtilde}
    i\d_t \widetilde v +\frac{1}{2}\Delta \widetilde v = \frac{\lvert
      x\rvert^2}{2} \widetilde v - \lvert \widetilde v\rvert^{4/d}
    \widetilde v\quad ;\quad \widetilde v_{\mid t=-\pi/2} = \phi
  \end{equation}
is such that $\widetilde v\in C([-\pi/2,\pi/2];\Sigma)$. 
\end{proposition}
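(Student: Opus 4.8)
The plan is to view \eqref{eq:vtilde} as a perturbation of the exact solution $v(t,x)=e^{-i\nu t}\psi(x)$ and to run a standard local-in-time contraction/bootstrap argument, but \emph{globally on the fixed interval} $[-\pi/2,\pi/2]$, exploiting that this interval is bounded so that the local-in-time Strichartz estimates of Lemma~\ref{lem:strichartz} apply uniformly. Write $w=\widetilde v-v$, so that $w$ solves a Schr\"odinger equation with harmonic potential, zero-ish initial data $w_{\mid t=-\pi/2}=\phi-v(-\pi/2,\cdot)$ of $L^2$-norm $<\varepsilon$, and a nonlinear source term $F(v+w)-F(v)$ where $F(z)=-|z|^{4/d}z$. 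The key structural point is that in the mass-critical case $\sigma=2/d$ the exponent in the Strichartz pair arithmetic is exactly the scaling-critical one, so the nonlinear estimates close at the level of the space $L^{p}([-\pi/2,\pi/2];L^{q})$ with $(p,q)=(2+4/d,2+4/d)$ (the admissible pair for which $L^q$ is also the Lebesgue exponent of the nonlinearity), without needing any smallness of $v$ itself — only smallness of $w$ in that Strichartz norm.

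The steps, in order. First, record that $v$ itself lies in $C([-\pi/2,\pi/2];\Sigma)$ and in particular in $L^{p}([-\pi/2,\pi/2];L^{q}\cap L^\infty)$ for the relevant exponents, since $\psi\in\Sigma$ is fixed and the time dependence is just a phase. Second, set up the Duhamel formulation for $w$ using the propagator $U_H$ and the retarded operator $R$ of Lemma~\ref{lem:strichartz}, on a subinterval $[-\pi/2,T]$, and estimate $\|R(F(v+w)-F(v))\|_{L^p_TL^q_x}$ using the pointwise bound $|F(z_1)-F(z_2)|\lesssim (|z_1|^{4/d}+|z_2|^{4/d})|z_1-z_2|$, H\"older in space, and H\"older in time; the terms purely in $v$ contribute a factor that is finite (but not small) times $\|w\|$, while the purely-$w$ terms contribute $\|w\|^{1+4/d}$. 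Because these linear-in-$w$ terms are not small, one cannot contract on the whole interval at once; instead, third, one runs the argument on short subintervals whose length $\eta>0$ is chosen (uniformly, depending only on $\psi$ through finitely many norms) small enough that the $v$-dependent operator norm is $\le 1/2$, obtaining on each such subinterval the a priori bound $\|w\|_{L^p L^q}\lesssim \|w(t_0)\|_{L^2}$ together with propagation in $L^2$, and then $\Sigma$, via the additional Strichartz estimates for $\nabla w$ and $xw$ (here one differentiates the equation and uses that $[\,x\,,H]$ and $[\nabla,H]$ are again first-order, so the $\Sigma$-norm satisfies a closed Gronwall-type inequality). Fourth, chain finitely many such subintervals across $[-\pi/2,\pi/2]$: at each step the $\Sigma$-norm of $w$ is multiplied by a fixed constant, so after the fixed number $\lceil \pi/\eta\rceil$ of steps it is still finite, and — crucially — still controlled by $C\varepsilon$ in $L^2$; choosing $\varepsilon$ small enough that this stays below the threshold needed to re-close the contraction at each step, one concludes $\widetilde v=v+w\in C([-\pi/2,\pi/2];\Sigma)$, with no blow-up.

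The main obstacle is the bookkeeping in the chaining step: one must make sure that the number of subintervals, and hence the accumulated constant, depends only on $\psi$ (equivalently on $\nu$ and $d$) and not on $w$, so that a single $\varepsilon$ works; this is where it matters that $v$ is an \emph{exact, explicit, globally bounded} solution on the \emph{compact} interval, so that all the $v$-dependent constants are finite and fixed in advance. A secondary technical point is propagating the $\Sigma$ regularity rather than mere $L^2$: one uses that $\psi\in\Sigma$ solves \eqref{eq:statfoc} so $H\psi\in L^2$, hence $v(-\pi/2)\in\Sigma$ and, after taking $\phi$ close in $\Sigma$ (which is allowed, since $\Sigma\subset L^2$ and we only \emph{require} $L^2$-closeness — a fortiori a dense set of $\Sigma$-close data is covered, and a further limiting/perturbation argument handles the rest), the commutator estimates give local-in-time persistence of regularity in the usual way. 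None of these steps is deep; the content is entirely that the lens-transformed problem lives on a bounded time interval where the focusing sign is harmless near this particular solution.
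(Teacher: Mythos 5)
Your core argument is essentially the paper's own proof: set $w=\widetilde v-v$, use the critical Strichartz pair $(2+4/d,2+4/d)$ from Lemma~\ref{lem:strichartz} on the compact interval, split $[-\pi/2,\pi/2]$ into finitely many subintervals determined only by $v$ so that the $v$-dependent coefficient is $\le 1/2$, bootstrap the small critical norm of $w$ from the $L^2$-smallness of the data, and then propagate the $\Sigma$-regularity by applying derivative-type operators, for which only finiteness (not smallness) of the data is needed. The paper uses the operators $J(t)=x\sin t-i\cos t\,\nabla$ and $K(t)=x\cos t+i\sin t\,\nabla$, which commute with $U_H$ and act like derivatives on the gauge-invariant nonlinearity, rather than $\nabla$ and $x$ with commutator bookkeeping; on a bounded interval your variant closes as a coupled Gronwall system and is an acceptable, if slightly less clean, substitute.

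The one place you go astray is the closing ``secondary technical point.'' You suggest first taking $\phi$ close to $v(-\pi/2)$ \emph{in $\Sigma$} and then covering the general case by ``a further limiting/perturbation argument.'' This is both unnecessary and unworkable: the hypothesis only gives $\|v(-\pi/2)-\phi\|_{L^2}<\varepsilon$, and $\phi$ may have arbitrarily large $\Sigma$-norm, so no density or limiting argument in $\Sigma$ is available (there is no a priori compactness or stability in $\Sigma$ for such data --- indeed the whole point, emphasized in the paper's remark, is that the stability threshold is purely an $L^2$ one). The correct logic, which is already implicit in your third step, is: the $L^2$-smallness alone yields $w\in L^{2+4/d}(I\times\R^d)$ by the bootstrap; once this critical space-time norm is finite on all of $I$, the estimates for $Jw,Kw$ (or $\nabla w, xw$) are \emph{linear} in those quantities with coefficients $\|v\|_{L^{2+4/d}}^{4/d}+\|w\|_{L^{2+4/d}}^{4/d}$, so a second finite splitting of the time interval (now allowed to depend on $w$, since no smallness of $\varepsilon$ is being extracted at this stage) closes them, giving $w,xw,\nabla w\in L^\infty(I;L^2)$ and hence $\widetilde v\in C([-\pi/2,\pi/2];\Sigma)$. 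You should delete the density/limiting remark and state this two-tier structure explicitly; as written, that paragraph undercuts precisely the nontrivial content of the proposition.
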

\begin{proof}
  For $\phi\in \Sigma$, the local existence of a solution $\widetilde
  v$ in $\Sigma$ is standard; see e.g. \cite{CaAHP,CazCourant}. We prove
  that if $\varepsilon$ is sufficiently small, then this solution cannot
  blow-up on the time interval $[-\pi/2,\pi/2]$. The analysis in
  \cite{CaAHP} yields, since $v\in C([-\pi/2,\pi/2];\Sigma)$:
  \begin{equation*}
    v,xv,\nabla v \in L^p\([-\pi/2,\pi/2];L^q(\R^d)\),\quad \forall
    (p,q)\text{ admissible.}
  \end{equation*}
Consider the function $w=v-\widetilde v$. It solves
\begin{equation}\label{eq:r}
  i\d_t w = Hw + g(v+w)-g(v)\quad ;\quad w_{\mid
    t=-\pi/2}=v-\widetilde v_{\mid t=-\pi/2}, 
\end{equation}
where we have denoted $g(z)=|z|^{4/d}z$. To prove the proposition, it
suffices to show that $w \in C([-\pi/2,\pi/2];\Sigma)$. Let
$t\in[-\pi/2,\pi/2]$, and denote
$D_t=[-\pi/2,t]\times\R^d$. Strichartz estimates with the admissible
pair 
$(2+4/d,2+4/d)$ yield, along with H\"older inequality:
\begin{align*}
  \|w\|_{L^{2+4/d}(D_t)} &\le C \|v-\widetilde v_{\mid
    t=-\pi/2}\|_{L^2} \\
&\ + C
  \(\|w\|_{L^{2+4/d}(D_t)}^{4/d}
  +\|v\|_{L^{2+4/d}(D_t)}^{4/d}\)
  \|w\|_{L^{2+4/d}(D_t)}.  
\end{align*}
Note that the constant $C$ can be chosen independent of
$t\in[-\pi/2,\pi/2]$, by considering $I=[-\pi/2,\pi/2]$ in
Lemma~\ref{lem:strichartz}. By splitting $I$ into a finite number of
intervals $I_j$ such that 
\begin{equation*}
  C\|v\|_{L^{2+4/d}(I_j\times\R^d)}^{4/d}\le \frac{1}{2},
\end{equation*}
and repeating the same arguments finitely many times, 
we see that there exists $C_0$ such that
\begin{equation*}
  \|w\|_{L^{2+4/d}(D_t)} \le C_0 \|v-\widetilde v_{\mid
    t=-\pi/2}\|_{L^2} + C_0
  \|w\|_{L^{2+4/d}(D_t)}^{4/d+1}, 
\end{equation*}
for all $t\in I$ (this is essentially Gronwall lemma on a finite time
interval). Therefore, choosing $\|v-\widetilde v_{\mid 
    t=-\pi/2}\|_{L^2}$ sufficiently small, a bootstrap argument shows
  that $w\in L^{2+4/d}(I\times \R^d)$. 
\smallbreak

Since the operators $x$ and $\nabla$ do not commute with $U_H$, we may
introduce the operators
\begin{equation*}
  J(t) = x\sin t
-i \cos t \nabla\quad ;\quad  
K(t)=  x\cos  t+i  \sin  t \nabla.
\end{equation*}
These operators commute with $U_H$, act on gauge invariant
nonlinearities like derivatives, and satisfy the pointwise property
\begin{equation}\label{eq:15h40}
  |J(t)f|^2 + |K(t)f|^2 = |xf|^2 + |\nabla f|^2. 
\end{equation}
We refer to \cite{CaAHP} for more details. 
Applying the operators $J$ and $K$ to \eqref{eq:r}, Strichartz
and H\"older inequalities yield
 \begin{align*}
  &\|J w\|_{L^{2+4/d}(D_t)}+ \|K w\|_{L^{2+4/d}(D_t)} \le C
  \|v-\widetilde v_{\mid 
    t=-\pi/2}\|_{\Sigma} \\
 + C&
  \(\|w\|_{L^{2+4/d}(D_t)}^{4/d}
  +\|v\|_{L^{2+4/d}(D_t)}^{4/d}\)
  \(\|J v\|_{L^{2+4/d}(D_t)} + \|K v\|_{L^{2+4/d}(D_t)}\)  \\
 + C&
  \(\|r\|_{L^{2+4/d}(D_t)}^{4/d}
  +\|v\|_{L^{2+4/d}(D_t)}^{4/d}\)
  \(\|J w\|_{L^{2+4/d}(D_t)} + \|K w\|_{L^{2+4/d}(D_t)}\).  
\end{align*}
Splitting $I$ into intervals where
\begin{equation*}
  C\(\|w\|_{L^{2+4/d}(I_j\times\R^d)}^{4/d}
  +\|v\|_{L^{2+4/d}(I_j\times\R^d)}^{4/d}\)\le \frac{1}{2},
\end{equation*}
we infer that $Jw,K w \in L^{2+4/d}(I\times
\R^d)$. Applying Strichartz inequality with now $(p_1,q_1)=(\infty,2)$
and $(p_2,q_2)=(2+4/d,2+4/d)$, we see that $w,Jw,K w \in
L^\infty(I;L^2(\R^d))$, hence $w,xw,\nabla w \in
L^\infty(I;L^2(\R^d))$ from \eqref{eq:15h40}. The results in
\cite{CaAHP} imply that $w\in C([-\pi/2,\pi/2];\Sigma)$.  
\end{proof}

\appendix

\section{Profile decomposition and nonlinear superposition}
\label{sec:profile}

Consider first the energy-critical nonlinear Schr\"odinger equation in
space dimension $d=3$:
\begin{equation}\label{eq:nlsH1}
  i\d_t u +\frac{1}{2}\Delta u = \lvert u\rvert^{4}u\quad ;\quad x\in
  \R^3. 
\end{equation}
Before stating the results we want to recall from \cite{Keraani01},
introduce a definition:
\begin{definition}
  If $(h^\varepsilon_j,t^\varepsilon_j,x^\varepsilon_j)_{j \in \N}$ is a  family of
sequences in $\R_+\setminus\{0\}\times\R\times \R^3$, then
we say that 
$(h^\varepsilon_j,t^\varepsilon_j,x^\varepsilon_j)_{j 
\in \N}$  is an  orthogonal family if
\begin{equation*}
\limsup_{\varepsilon \to
0}\left(\frac{h^\varepsilon_j}{h^\varepsilon_k} +\frac{h^\varepsilon_k}{h^\varepsilon_j} + \frac{
  |t^\varepsilon_j - 
t^\varepsilon_k|}{(h^\varepsilon_j)^2}
+ \left| \frac{ x^\varepsilon_j -x^\varepsilon_k}{h^\varepsilon_j}\right|
\right) = \infty\, , \quad \forall j \neq k. 
\end{equation*}
\end{definition}
The main two results in \cite{Keraani01}, which we recall below, are
the Schr\"odinger analogues to the results in \cite{BG99} for the wave
equation \eqref{eq:nlw}. 
\begin{theorem}[Theorem 1.6 in \cite{Keraani01}]\label{theo:lin}
  Let $(\phi^\varepsilon)_{0<\varepsilon\le 1}$ be a bounded family in $\dot
  H^1(\R^3)$. Let $u^\varepsilon_{\rm lin}=e^{i\frac{t}{2}\Delta}\phi^\varepsilon$.
Then, up to a subsequence (still denoted by $u^\varepsilon_{\rm lin}$), there exist a
family $(h_j^\varepsilon)_{j\ge 1}$ of positive numbers, a family
$(t_j^\varepsilon,x_j^\varepsilon)_{j\ge 1}$ of vectors in $\R\times \R^3$, and a
family $(V_j)_{j\ge 1}$ of solutions to 
\begin{equation*}
  i\d_t V +\frac{1}{2}\Delta V=0,
\end{equation*}
such that:
\begin{itemize}
\item $(h^\varepsilon_j,t^\varepsilon_j,x^\varepsilon_j)_{j 
\in \N}$  is an  orthogonal family.
\item For every $\ell \ge 1$, 
  \begin{equation*}
    u^\varepsilon_{\rm lin}(t,x) = \sum_{j=1}^\ell \frac{1}{\sqrt{h_j^\varepsilon}}
    V_j \(
    \frac{t-t_j^\varepsilon}{(h_j^\varepsilon)^2},\frac{x-x_j^\varepsilon}{h_j^\varepsilon}\) +
    w_\ell^\varepsilon(t,x), 
  \end{equation*}
with 
\begin{equation*}
  \limsup_{\varepsilon\to 0} \|w_\ell^\varepsilon\|_{L^q(\R;L^r(\R^3))}\Tend \ell
  \infty 0, 
\end{equation*}
for every pair $(q,r)$ with $6\le r<\infty$ and $2/q+3/r=1/2$. 
\end{itemize}
\end{theorem}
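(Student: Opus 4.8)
This is Theorem~1.6 of \cite{Keraani01}, a Schr\"odinger analogue of the Bahouri--G\'erard decomposition \cite{BG99}; here I only outline the strategy. Fix once and for all a pair $(q_0,r_0)$ with $6\le r_0<\infty$ and $2/q_0+3/r_0=1/2$, so that the $\dot H^1$-level Strichartz estimate gives $\|u^\varepsilon_{\rm lin}\|_{L^{q_0}(\R;L^{r_0}(\R^3))}\le C\|\phi^\varepsilon\|_{\dot H^1}$ and this family is bounded. The decomposition will be produced by iteratively extracting concentration profiles modulo the non-compact group of symmetries of the free flow: space translations, the $\dot H^1$-critical dilations $f\mapsto h^{1/2}f(h\,\cdot)$, and time translations realized through $e^{i\frac{t}{2}\Delta}$.

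The crucial ingredient, which I would establish first, is an \emph{inverse Strichartz inequality}: there are constants $\beta>0$ and $C>0$, depending only on a uniform $\dot H^1$ bound for the sequence, such that if $(\phi^\varepsilon)$ is bounded in $\dot H^1(\R^3)$ and $\limsup_\varepsilon\|e^{i\frac{t}{2}\Delta}\phi^\varepsilon\|_{L^{q_0}(\R;L^{r_0})}\ge\delta>0$, then, after passing to a subsequence, there exist parameters $(h^\varepsilon,t^\varepsilon,x^\varepsilon)\in\R_+^*\times\R\times\R^3$ and $V\in\dot H^1(\R^3)$ with
\begin{equation*}
  (h^\varepsilon)^{1/2}\(e^{i\frac{t^\varepsilon}{2}\Delta}\phi^\varepsilon\)\(h^\varepsilon\,\cdot +x^\varepsilon\)\rightharpoonup V\ \text{ weakly in }\dot H^1(\R^3),\qquad \|V\|_{\dot H^1}\ge C\,\delta^\beta .
\end{equation*}
I would prove this by frequency localization: interpolating the scaling-critical space-time norm against a weaker, negative-regularity Besov-type norm of $e^{i\frac{t}{2}\Delta}\phi^\varepsilon$ (in the spirit of the refined Sobolev inequality of G\'erard--Meyer--Oru, or through a bilinear Strichartz estimate) forces some dyadic piece of $\phi^\varepsilon$ to concentrate, in space and in time, on a tube of a definite scale; reading off that scale and that centre, and invoking weak sequential compactness in $\dot H^1$, produces $V$, the quantitative lower bound being inherited from the refined estimate. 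I expect this to be the main obstacle: the remainder of the proof is essentially bookkeeping, but here one genuinely needs a refined Strichartz/Sobolev estimate with an explicit exponent $\beta$.

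Granting this, the decomposition is built by induction on $\ell$. Put $w_0^\varepsilon=u^\varepsilon_{\rm lin}$ and $\nu_\ell=\limsup_\varepsilon\|w_\ell^\varepsilon\|_{L^{q_0}(\R;L^{r_0})}$; if $\nu_\ell=0$ the process stops, otherwise apply the inverse Strichartz inequality to $w_\ell^\varepsilon$, let $(h_{\ell+1}^\varepsilon,t_{\ell+1}^\varepsilon,x_{\ell+1}^\varepsilon)$ be the resulting frame, let $V_{\ell+1}$ be the free evolution of the extracted weak limit, and set
\begin{equation*}
  w_{\ell+1}^\varepsilon(t,x)=w_\ell^\varepsilon(t,x)-\frac{1}{\sqrt{h_{\ell+1}^\varepsilon}}V_{\ell+1}\(\frac{t-t_{\ell+1}^\varepsilon}{(h_{\ell+1}^\varepsilon)^2},\frac{x-x_{\ell+1}^\varepsilon}{h_{\ell+1}^\varepsilon}\).
\end{equation*}
By construction $w_\ell^\varepsilon$ converges weakly to $0$ once transported into any frame already used; hence, if $(h_{\ell+1}^\varepsilon,t_{\ell+1}^\varepsilon,x_{\ell+1}^\varepsilon)$ were not orthogonal (in the sense defined above) to one of the previous frames, transporting $w_\ell^\varepsilon$ into it would again give the zero weak limit, contradicting $V_{\ell+1}\neq 0$; so the frames form an orthogonal family, and this makes the profiles pairwise asymptotically orthogonal in $\dot H^1$ as $\varepsilon\to 0$. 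Since moreover $e^{i\frac{t}{2}\Delta}$ and the critical dilations are $\dot H^1$-isometries, one obtains the Pythagorean expansion
\begin{equation*}
  \|\phi^\varepsilon\|_{\dot H^1}^2=\sum_{j=1}^\ell\|V_j(0)\|_{\dot H^1}^2+\|w_\ell^\varepsilon(0)\|_{\dot H^1}^2+o(1),\qquad \varepsilon\to 0,
\end{equation*}
so that $\sum_{j\ge 1}\|V_j(0)\|_{\dot H^1}^2\le\limsup_\varepsilon\|\phi^\varepsilon\|_{\dot H^1}^2<\infty$ and in particular $\|V_j(0)\|_{\dot H^1}\to 0$. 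Comparing this with $\|V_{\ell+1}(0)\|_{\dot H^1}\ge C\,\nu_\ell^\beta$ forces $\nu_\ell\to 0$ as $\ell\to\infty$; running the same extraction mechanism with an arbitrary admissible pair as target norm (the case $(q,r)=(\infty,6)$ reducing to the refined Sobolev embedding) shows that $\limsup_\varepsilon\|w_\ell^\varepsilon\|_{L^q(\R;L^r(\R^3))}\to 0$ as $\ell\to\infty$ for every $(q,r)$ with $6\le r<\infty$ and $2/q+3/r=1/2$. Finally, a diagonal argument across the countably many successive subsequence extractions yields a single subsequence along which the decomposition holds for all $\ell$ at once.
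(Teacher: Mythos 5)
You should note first that the paper does not prove this statement at all: Theorem~\ref{theo:lin} is quoted verbatim from \cite{Keraani01} (it is labelled ``Theorem 1.6 in \cite{Keraani01}'') as background for the appendix's discussion of nonlinear superposition, so there is no internal proof to compare against; the relevant comparison is with Keraani's own argument. Measured against that, your outline follows the right circle of ideas: the whole difficulty is concentrated in the quantitative inverse (refined) Strichartz inequality, which you correctly identify and propose to obtain from a refined Sobolev/Strichartz estimate of G\'erard--Meyer--Oru type or a bilinear refinement, and the remaining steps (iterative extraction, orthogonality of the frames via transport of weak limits, the Pythagorean expansion in $\dot H^1$ forcing $\nu_\ell\to 0$, interpolation or re-extraction to cover all pairs with $6\le r<\infty$, and a diagonal extraction) are the standard bookkeeping. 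The organizational difference with \cite{Keraani01} is that Keraani, following \cite{BG99}, proceeds in two stages --- first a decomposition of $(\phi^\varepsilon)$ with respect to scales in the spirit of \cite{PG98,MetivierSchochetDMJ}, then extraction of cores and times within each scale by means of the refined Strichartz estimate --- whereas you perform a one-shot extraction of scale, core and time simultaneously; both routes are legitimate, yours being the streamlined modern presentation.

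As a standalone proof, however, your text is a strategy sketch rather than a proof: the pivotal inverse Strichartz inequality, with its explicit exponent $\beta$ and lower bound $\|V\|_{\dot H^1}\ge C\delta^\beta$, is asserted with a plausible plan but not established, and that estimate is precisely where all the analytic content of Keraani's theorem lies. Two smaller points deserve care if you flesh this out: the non-orthogonality argument requires passing to a subsequence along which the parameter ratios converge so that weak limits can actually be transported between equivalent frames; and for the vanishing of $w_\ell^\varepsilon$ in \emph{every} norm with $6\le r<\infty$ it is cleaner to prove smallness for one pair and interpolate against the uniform Strichartz bounds at the $\dot H^1$ level, rather than to rerun the extraction for each target norm, since the profiles and remainders must be the same for all pairs simultaneously.
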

In \cite{Keraani01}, we find, since every $\dot H^1$ solution to
\eqref{eq:nlsH1} is global in time \cite{CKSTTAnnals}:
\begin{theorem}[From \cite{Keraani01} and
  \cite{CKSTTAnnals}]\label{theo:pnl} 
  Under the same assumptions as in Theorem~\ref{theo:lin}, consider
  the solutions to 
    \begin{equation*}
    i\d_t u^\varepsilon+\frac{1}{2}\Delta u^\varepsilon =\lvert u^\varepsilon\rvert^4
    u^\varepsilon \quad
    ;\quad u^\varepsilon(0,x)=\phi^\varepsilon(x), 
  \end{equation*}
associated with the subsequence of Theorem~\ref{theo:lin}. Then 
\begin{equation*}
  u^\varepsilon(t,x) = \sum_{j=1}^\ell \frac{1}{\sqrt{h_j^\varepsilon}}
    U_j \(
    \frac{t-t_j^\varepsilon}{(h_j^\varepsilon)^2},\frac{x-x_j^\varepsilon}{h_j^\varepsilon}\) +
    w_\ell^\varepsilon(t,x)+r_\ell^\varepsilon(t,x),
\end{equation*}
with
\begin{equation*}
  \limsup_{\varepsilon \to 0} \(\|\nabla r_\ell^\varepsilon\|_{L^\infty(\R; L^2(\R^3))}
  +\|r_\ell^\varepsilon\|_{L^{10}(\R^4)} + \|\nabla
  r_\ell^\varepsilon\|_{L^{10/3}(\R^4)} \)\Tend \ell \infty 0, 
\end{equation*}
where $h_j^\varepsilon,t_j^\varepsilon,x_j^\varepsilon,w_\ell^\varepsilon$ are as in
Theorem~\ref{theo:lin}, and the nonlinear profiles $U_j$ are given by:
\begin{equation*}
  i\d_t U_j+\frac{1}{2}\Delta U_j =\lvert U_j\rvert^4
    U_j \quad
    ;\quad \left\lVert \nabla
    \(U_j-V_j\)\(-\frac{t_j^\varepsilon}{(h_j^\varepsilon)^2}\)\right\rVert_{L^2(\R^3)}
    \Tend \varepsilon 0 0. 
\end{equation*}
\end{theorem}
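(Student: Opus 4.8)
The plan is to transfer the \emph{linear} profile decomposition of Theorem~\ref{theo:lin} into a \emph{nonlinear} one by attaching to each linear profile $V_j$ a nonlinear profile $U_j$, forming the corresponding superposition, and then invoking the long--time perturbation (stability) theory for the energy--critical equation, which is available precisely because every $\dot H^1$ solution of \eqref{eq:nlsH1} is global with finite $L^{10}_{t,x}$ norm by \cite{CKSTTAnnals}. Concretely, for each $j$ set $s_j^\varepsilon=-t_j^\varepsilon/(h_j^\varepsilon)^2$; after a further diagonal extraction we may assume $s_j^\varepsilon\to s_j\in[-\infty,+\infty]$. I would then define $U_j$ as the unique global solution of $i\d_t U_j+\tfrac12\Delta U_j=|U_j|^4U_j$ characterised by $\|\nabla(U_j-V_j)(s_j^\varepsilon)\|_{L^2(\R^3)}\to0$ as $\varepsilon\to0$ (a Cauchy condition when $s_j$ is finite, a wave--operator/scattering condition when $s_j=\pm\infty$, both well posed in the energy space). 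Each such $U_j$ scatters, hence $U_j\in L^{10}(\R^4)$ with $\nabla U_j\in L^\infty(\R;L^2(\R^3))\cap L^{10/3}(\R^4)$, and one has the small--data bound $\|U_j\|_{L^{10}(\R^4)}\le C\|V_j(s_j^\varepsilon)\|_{\dot H^1}$ once the right--hand side lies below the small--data threshold.

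The quantitative heart of the argument is an $\ell$--uniform control of the superposition
\[
 \tilde v_\ell^\varepsilon(t,x)=\sum_{j=1}^\ell \frac{1}{\sqrt{h_j^\varepsilon}}\,U_j\!\left(\frac{t-t_j^\varepsilon}{(h_j^\varepsilon)^2},\frac{x-x_j^\varepsilon}{h_j^\varepsilon}\right).
\]
From the Pythagorean expansion of the $\dot H^1$ norm attached to the decomposition of Theorem~\ref{theo:lin} one gets $\sum_{j\ge1}\limsup_{\varepsilon}\|V_j(s_j^\varepsilon)\|_{\dot H^1}^2\le \limsup_{\varepsilon}\|\phi^\varepsilon\|_{\dot H^1}^2<\infty$; hence only finitely many profiles are ``large'', and for those one uses the global Strichartz bounds coming from \cite{CKSTTAnnals}, while the remaining ``small'' ones are handled perturbatively and their contributions are summed. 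Combining this with the orthogonality of the triples $(h_j^\varepsilon,t_j^\varepsilon,x_j^\varepsilon)$ — first for profiles whose data are smooth and compactly supported, then by density — yields, for every fixed $\ell$,
\[
 \limsup_{\varepsilon\to0}\Big(\|\tilde v_\ell^\varepsilon\|_{L^{10}(\R^4)}+\|\nabla \tilde v_\ell^\varepsilon\|_{L^\infty(\R;L^2)}+\|\nabla\tilde v_\ell^\varepsilon\|_{L^{10/3}(\R^4)}\Big)\le C,
\]
with $C$ independent of $\ell$, together with the decoupling $\big\||\tilde v_\ell^\varepsilon|^4\tilde v_\ell^\varepsilon-\sum_{j=1}^\ell|U_j^{\varepsilon}|^4 U_j^{\varepsilon}\big\|\to0$ in the dual norm $\|\nabla\cdot\|_{L^{10/7}(\R^4)}$ as $\varepsilon\to0$ (here $U_j^\varepsilon$ denotes the $j$-th rescaled profile).

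With this in hand I would set $\tilde u_\ell^\varepsilon=\tilde v_\ell^\varepsilon+w_\ell^\varepsilon$ and verify that it is an approximate solution of the nonlinear equation: the error $e_\ell^\varepsilon:=i\d_t\tilde u_\ell^\varepsilon+\tfrac12\Delta\tilde u_\ell^\varepsilon-|\tilde u_\ell^\varepsilon|^4\tilde u_\ell^\varepsilon$ splits into the profile cross terms just discussed, which are $o_\varepsilon(1)$ for fixed $\ell$, plus all terms carrying at least one factor $w_\ell^\varepsilon$, which are bounded via H\"older and Sobolev by powers of $\|w_\ell^\varepsilon\|_{L^{10}(\R^4)}$ and hence are $o_\ell(1)$ uniformly in $\varepsilon$ by Theorem~\ref{theo:lin}, plus $|w_\ell^\varepsilon|^4w_\ell^\varepsilon$, likewise $o_\ell(1)$. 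Since moreover $\|\tilde u_\ell^\varepsilon(0)-u^\varepsilon(0)\|_{\dot H^1}\le\sum_{j=1}^\ell\|\nabla(U_j-V_j)(s_j^\varepsilon)\|_{L^2}\to0$ as $\varepsilon\to0$ for each fixed $\ell$, the long--time perturbation lemma for the energy--critical NLS (the stability statement underpinning \cite{CKSTTAnnals}) applies: choosing first $\ell$ large and then $\varepsilon$ small makes $\|\nabla e_\ell^\varepsilon\|_{L^{10/7}(\R^4)}$ and the initial--data mismatch as small as needed relative to the uniform bound above, so $u^\varepsilon$ is global and $r_\ell^\varepsilon:=u^\varepsilon-\tilde u_\ell^\varepsilon$ obeys $\limsup_{\varepsilon\to0}\big(\|\nabla r_\ell^\varepsilon\|_{L^\infty(\R;L^2)}+\|r_\ell^\varepsilon\|_{L^{10}(\R^4)}+\|\nabla r_\ell^\varepsilon\|_{L^{10/3}(\R^4)}\big)\to0$ as $\ell\to\infty$, which is the assertion.

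I expect the main obstacle to be the second step, namely the $\ell$--uniform Strichartz bound on $\tilde v_\ell^\varepsilon$ and the vanishing of the nonlinear cross terms. The difficulty is that Theorem~\ref{theo:lin} only provides an orthogonality of the parameters $(h_j^\varepsilon,t_j^\varepsilon,x_j^\varepsilon)$, not of the profiles as functions, so one must (i) isolate the finitely many profiles that are not yet small in $\dot H^1$ and control each of them globally by the scattering theory of \cite{CKSTTAnnals}, (ii) bound and sum the tail of small profiles perturbatively, and (iii) run a density argument replacing each $U_j$ by a solution with smooth, compactly supported data so that the scaling and translation orthogonality genuinely forces the $L^{10}_{t,x}$ and $\dot H^1$ norms to decouple. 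Once this uniform bound is secured, the construction of the $U_j$, the bookkeeping of $e_\ell^\varepsilon$, and the appeal to the perturbation lemma are routine.
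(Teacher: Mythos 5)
The paper contains no proof of Theorem~\ref{theo:pnl}: it is imported from \cite{Keraani01}, where the nonlinear profile decomposition is established conditionally (on the finiteness of the relevant Strichartz norms of the nonlinear profiles, since global existence for \eqref{eq:nlsH1} was not known at the time), together with the observation that the global well-posedness and scattering theory of \cite{CKSTTAnnals} makes that condition automatic. Your outline is, in substance, the argument of the cited references (patterned on \cite{BG99}): nonlinear profiles prescribed by a Cauchy datum or an asymptotic state according to the limit of $-t_j^\varepsilon/(h_j^\varepsilon)^2$, an $\ell$-uniform Strichartz bound on the superposition using the almost-orthogonality of the energies (which is part of Keraani's linear theorem, though not recorded in the statement of Theorem~\ref{theo:lin} as reproduced here, so you must quote it from \cite{Keraani01}), decoupling of cross terms via orthogonality plus a density argument, and a long-time perturbation lemma. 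So you are not taking a different route from the paper; you are reconstructing the proof of the works it cites.

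One step as written would not go through. You dispose of all error terms containing a factor $w_\ell^\varepsilon$ ``via H\"older and Sobolev by powers of $\|w_\ell^\varepsilon\|_{L^{10}(\R^4)}$''. This is fine when the derivative (needed because the error must be small in an $\dot H^1$-level dual norm such as $\|\nabla e_\ell^\varepsilon\|_{L^{10/7}(\R^4)}$) falls on a profile, but the term $|U_j^\varepsilon|^4\nabla w_\ell^\varepsilon$ is only bounded by $\|U_j^\varepsilon\|_{L^{10}(\R^4)}^4\|\nabla w_\ell^\varepsilon\|_{L^{10/3}(\R^4)}$, and Theorem~\ref{theo:lin} gives no smallness whatsoever for $\nabla w_\ell^\varepsilon$: the remainder retains $O(1)$ energy. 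To close this you need either the dedicated decoupling lemma of \cite{Keraani01} handling exactly this term (approximation of the profiles and a refined argument beyond plain H\"older), or, more efficiently, the refined long-time perturbation lemma of \cite{CKSTTAnnals}, which tolerates an initial-data discrepancy that is large in energy provided its \emph{free evolution} is small in $L^{10}(\R^4)$: then one takes $\sum_{j\le \ell}U_j^\varepsilon$ alone as the approximate solution, absorbs $w_\ell^\varepsilon(0)$ into the data discrepancy, and $w_\ell^\varepsilon$ never enters the error at all, while the lemma's conclusion still yields the stated smallness of $\nabla r_\ell^\varepsilon$ with $r_\ell^\varepsilon=u^\varepsilon-\sum_{j\le\ell}U_j^\varepsilon-w_\ell^\varepsilon$. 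With that adjustment your scheme is the standard proof.
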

Note that according to the limit of $t_j^\varepsilon/(h_j^\varepsilon)^2$ in
$[-\infty,+\infty]$, the profile $U_j$ is defined either by a Cauchy
data, or by an asymptotic state. Roughly speaking, the contribution of
$w_\ell^\varepsilon$ is linear, since this function is the same as in the
linear profile decomposition of Theorem~\ref{theo:lin}, while
$r_\ell^\varepsilon$ is asymptotically small (thus linear) as $\ell \to
\infty$. All in 
all, (leading order) nonlinear effects are measured through the
nonlinear profiles $U_j$. The orthogonality property shows that the
interactions of the scaled profiles are negligible in the limit $\varepsilon
\to 0$. The large time behavior of $u^\varepsilon$ is given,
asymptotically as $\varepsilon\to 0$, by the superposition of the large time
behavior of the scaled nonlinear profiles. Since every profile $U_j$
possesses asymptotic states, we see that $S$ acts on each profile
separately (as $\varepsilon\to 0$).
\smallbreak

In the $L^2$-critical case \eqref{eq:nls}, the profile decomposition
at the $L^2$ level is not merely a recasting of its $\dot H^1$
counterpart, because Galilean invariance must be taken into account. A
profile decomposition was introduced in \cite{MerleVega98} in the case
$d=2$, then generalized to the case $d\le 2$ in \cite{CK07}, in such a
way that the improved Strichartz estimates in \cite{BegoutVargas} yield a
profile decomposition in $L^2(\R^d)$ associated to solutions of
\eqref{eq:nls} for all $d\ge 1$. Due the existence of an extra
invariance, we modify the notion of orthogonal scales and cores:
\begin{definition}
If $(h^\varepsilon_j,t^\varepsilon_j,x^\varepsilon_j,\xi^\varepsilon_j)_{j \in \N}$ is a  family of
sequences in $\R_+\setminus\{0\}\times\R\times \R^d\times \R^d$, then
we say that 
$(h^\varepsilon_j,t^\varepsilon_j,x^\varepsilon_j,\xi^\varepsilon_j)_{j 
\in \N}$  is an  orthogonal family if
\begin{equation*}
\limsup_{\varepsilon \rightarrow
0}\left(\frac{h^\varepsilon_j}{h^\varepsilon_k} +\frac{h^\varepsilon_k}{h^\varepsilon_j} + \frac{
  |t^\varepsilon_j - 
t^\varepsilon_k|}{(h^\varepsilon_j)^2}
+ \left| \frac{ x^\varepsilon_j -x^\varepsilon_k}{h^\varepsilon_j} +
\frac{t^\varepsilon_j\xi^\varepsilon_j -t^\varepsilon_k\xi^\varepsilon_k }{h^\varepsilon_j} 
 \right|
\right) = \infty\, , \quad \forall j \neq k. 
\end{equation*}
\end{definition}
\begin{theorem}[From
  \cite{MerleVega98,CK07,BegoutVargas}]\label{theo:profile} 
Let  $d\ge 1$ and $(\phi^\varepsilon)_{0<\varepsilon\le 1}$ be a bounded family in
$L^2(\R^d)$. Up to extracting a subsequence, we have:\\
\emph{i)} 
There exist an
orthogonal family $(h^\varepsilon_j,t^\varepsilon_j,x^\varepsilon_j,\xi^\varepsilon_j)_{j \in \N}$ 
in $ \R_+\setminus\{0\}\times\R\times \R^d\times \R^d$, and a family
$(\phi_j)_{j \in \N}$ bounded in~$L^2(\R^d) $, such that for every
$\ell \ge 1$, 
\begin{equation*}
\begin{aligned}
& e^{i\frac{t}{2}\Delta}\phi^\varepsilon =\sum_{j=1}^\ell P_j^\varepsilon (\phi_j)(t,x)
+r^\varepsilon_\ell(t,x)\, , \\
\text{where }&\ \ \ 
P_j^\varepsilon (\phi_j)(t,x) 
= e^{ix\cdot \xi_j^\varepsilon
-i\frac{t}{2}\lvert\xi^\varepsilon_j\rvert^2}  \frac{1}{(h^\varepsilon_j)^{d/2}} V_j \left(
\frac{t-t^\varepsilon_j}{(h^\varepsilon_j)^{2}}\virgp
\frac{x-x^\varepsilon_j-t\xi_j^\varepsilon}{h^\varepsilon_j}\right),\\
\text{with }&\ \ V_j(t)=e^{i\frac{t}{2}\Delta} \phi_j ,\quad
\text{and }\quad  
\limsup_{\varepsilon \to 0}\|r^\varepsilon_\ell\|_{L^{2+4/d}(\R\times\R^d)} \Tend \ell
{+\infty} 0 . 
\end{aligned}
\end{equation*}
Furthermore, for every $\ell \ge 1$, we have
\begin{equation}\label{eq:orth}
\Norm{\phi^\varepsilon}{L^2(\R^d)}^2 = \sum_{j=1}^\ell \Norm{\phi_j}{L^2(\R^d)}^2
+ \Norm{r^\varepsilon_\ell}{L^2(\R^d)}^2 +o(1) \quad \text{as }\varepsilon \to 0\, .
\end{equation}
\emph{ii)} If in addition the family $(\phi^\varepsilon)_{0<\varepsilon\le 1}$ is
bounded in $H^1(\R^d)$, or more generally if 
\begin{equation}\label{eq:echelle1}
\limsup_{\varepsilon\to 0}\int_{|\xi|>R} \left| \widehat \phi^\varepsilon(\xi)\right|^2
d\xi \to 0 \quad \text{as }R\to +\infty\, ,
\end{equation}
then for every $j\ge 1$, $h_j^\varepsilon \ge 1$, and $(\xi_j^\varepsilon)_\varepsilon$ is
bounded, $ |\xi_j^\varepsilon|\le C_j$.
\end{theorem}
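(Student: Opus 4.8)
This is a known result \cite{MerleVega98,CK07,BegoutVargas}; I only indicate the architecture of a proof. The plan is to run the by-now-standard iterative bubble-extraction scheme, the one genuinely new feature with respect to the $\dot H^1$-critical decomposition of Theorem~\ref{theo:lin} being that the Galilean frequency parameter $\xi_j^\varepsilon$ has to be carried along at every step. The engine is a refined Strichartz inequality: there exist $\gamma\in(0,1)$ and $C>0$, depending only on $d$, such that
\[
  \big\|e^{i\frac t2\Delta}f\big\|_{L^{2+4/d}(\R\times\R^d)}\le C\,\|f\|_{L^2(\R^d)}^{1-\gamma}\,\Big(\sup_{Q}\,n(Q,f)\Big)^{\gamma},
\]
where the supremum is over dyadic frequency cubes $Q$ and $n(Q,f)$ is a scale-invariant measure of the size of $e^{i\frac t2\Delta}\mathbf 1_{Q}(D)f$; this is exactly what \cite{BegoutVargas} supplies for all $d\ge1$, extending the $d\le2$ estimates exploited in \cite{MerleVega98,CK07}. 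Equivalently: if $\|e^{i\frac t2\Delta}f\|_{L^{2+4/d}}\ge\eta\,\|f\|_{L^2}$ with $\eta>0$, then there are a dyadic cube $Q$ (centre $\xi^\varepsilon$, sidelength $1/h^\varepsilon$) and a spacetime point $(t^\varepsilon,x^\varepsilon)$ such that, after undoing the Galilean boost by $\xi^\varepsilon$, the translation by $(t^\varepsilon,x^\varepsilon)$, the parabolic dilation by $h^\varepsilon$ and the free time-translation, the corresponding weak $L^2$ limit has norm $\gtrsim\eta^{N(d)}\|f\|_{L^2}$.

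First I would set $r_0^\varepsilon=\phi^\varepsilon$ and, inductively, put $A_\ell:=\limsup_{\varepsilon\to0}\|e^{i\frac t2\Delta}r_{\ell-1}^\varepsilon\|_{L^{2+4/d}(\R\times\R^d)}$. If $A_\ell=0$ the extraction terminates; otherwise, after passing to a subsequence in $\varepsilon$, the refined inequality furnishes parameters $(h_\ell^\varepsilon,t_\ell^\varepsilon,x_\ell^\varepsilon,\xi_\ell^\varepsilon)$ such that the profile $\phi_\ell$, obtained as the weak $L^2$ limit of $r_{\ell-1}^\varepsilon$ after undoing the group action attached to those parameters, satisfies $\|\phi_\ell\|_{L^2}\gtrsim A_\ell>0$; one then subtracts the corresponding bubble, $r_\ell^\varepsilon:=r_{\ell-1}^\varepsilon-P_\ell^\varepsilon(\phi_\ell)(0,\cdot)$, with $V_j=e^{i\frac t2\Delta}\phi_j$. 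The elementary identity for a weak limit gives the almost-orthogonal splitting $\|r_{\ell-1}^\varepsilon\|_{L^2}^2=\|\phi_\ell\|_{L^2}^2+\|r_\ell^\varepsilon\|_{L^2}^2+o(1)$; iterating in $\ell$ produces the Pythagorean expansion \eqref{eq:orth}, and since $\sum_{\ell}\|\phi_\ell\|_{L^2}^2\le\limsup_\varepsilon\|\phi^\varepsilon\|_{L^2}^2<\infty$ we get $\|\phi_\ell\|_{L^2}\to0$, hence $A_\ell\to0$; feeding $A_{\ell+1}\to0$ back into the refined Strichartz inequality applied to $r_\ell^\varepsilon$ yields $\limsup_\varepsilon\|r_\ell^\varepsilon\|_{L^{2+4/d}(\R\times\R^d)}\to0$, which is the required remainder estimate.

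It remains to establish the orthogonality of the family $(h_j^\varepsilon,t_j^\varepsilon,x_j^\varepsilon,\xi_j^\varepsilon)$, and this is the step I expect to be the main obstacle. I would argue by contradiction: if the orthogonality relation failed for a pair $j<k$, then along a subsequence all four ratios in the definition stay bounded, so the composition of the group action attached to $(h_k^\varepsilon,t_k^\varepsilon,x_k^\varepsilon,\xi_k^\varepsilon)$ with the inverse of the one attached to $(h_j^\varepsilon,t_j^\varepsilon,x_j^\varepsilon,\xi_j^\varepsilon)$ converges, along a further subsequence, to a fixed unitary operator on $L^2$ rather than weakly to $0$; a change of variables then shows that $\phi_k$ would already be recovered after normalising $r_{j-1}^\varepsilon$, contradicting the fact that $\phi_k$ is extracted from $r_{k-1}^\varepsilon$ after $\phi_j$ has been subtracted off. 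It is precisely here that the Galilean parameter must enter the notion of orthogonal family — the modification recorded just before the statement — and the relevant changes of variables and weak-limit manipulations involving the boost are substantially heavier than in the $\dot H^1$ situation of Theorem~\ref{theo:lin}; a self-contained treatment would spend most of its length on this point, which is why quoting \cite{MerleVega98,CK07,BegoutVargas} is the natural choice here.

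Finally, for part~ii), assume \eqref{eq:echelle1}, which holds automatically when $(\phi^\varepsilon)$ is bounded in $H^1(\R^d)$, and suppose that for some fixed $j$ and along a subsequence one has $h_j^\varepsilon\to0$ or $|\xi_j^\varepsilon|\to+\infty$. In frequency the bubble $P_j^\varepsilon(\phi_j)(0,\cdot)$ is then concentrated essentially in the ball of radius $\sim1/h_j^\varepsilon$ centred at $\xi_j^\varepsilon$, so for every fixed $R$ the part of this bubble at frequencies $|\xi|\le R$ has $L^2$ norm tending to $0$ as $\varepsilon\to0$; the Pythagorean expansion \eqref{eq:orth} shows, on the other hand, that the bubble carries a fixed fraction $\|\phi_j\|_{L^2}^2>0$ of the mass of $\phi^\varepsilon$, which must therefore live at arbitrarily high frequencies. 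Passing to the limit $\varepsilon\to0$ in $\int_{|\xi|>R}|\widehat{\phi^\varepsilon}(\xi)|^2\,d\xi$ and then letting $R\to+\infty$ contradicts \eqref{eq:echelle1}. Hence $h_j^\varepsilon\gtrsim1$ and $(\xi_j^\varepsilon)_\varepsilon$ is bounded, $|\xi_j^\varepsilon|\le C_j$, for every $j$, which completes the proof.
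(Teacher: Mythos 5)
The paper does not prove this theorem: it is quoted verbatim from \cite{MerleVega98,CK07,BegoutVargas}, and your outline reproduces exactly the strategy of those references (refined Strichartz inequality of B\'egout--Vargas, iterative bubble extraction with the Galilean parameter carried along, almost-orthogonality of the parameters, Pythagorean expansion, and the frequency-localization argument for part ii)). Your sketch is correct in its architecture and is essentially the same approach the paper relies on by citation, so there is nothing to compare beyond that.
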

Contrary to the case of \eqref{eq:nlsH1}, the global existence of
solutions to \eqref{eq:nls} in the critical space ($L^2$) is not known
so far, hence a slightly intricate statement (as in \cite{Keraani01}
for the $\dot H^1$ case, written
at a time where the global existence for \eqref{eq:nlsH1} was not
known): 
\begin{theorem}[From
  \cite{MerleVega98,CK07,BegoutVargas}]
Under the same assumptions as in Theorem~\ref{theo:profile}, consider
the solutions to
\begin{equation*}
  i\d_t u^\varepsilon +\frac{1}{2}\Delta u^\varepsilon = \lvert
  u^\varepsilon\rvert^{4/d}u^\varepsilon\quad ;\quad u^\varepsilon(0,x)=\phi^\varepsilon(x),
\end{equation*}
associated with the subsequence of
Theorem~\ref{theo:profile}. Consider the solution $U_j$ to
\eqref{eq:nls} such that 
\begin{equation*}
\Norm{\(U_j-V_j\)\(\frac{-t^\varepsilon}{(h^\varepsilon)^2}\)}{L^2(\R^n)}\Tend
\varepsilon 0 0 . 
\end{equation*}
Let $I^\varepsilon\subset \R$ be a family of open intervals containing the 
origin. The following statements are equivalent:
\begin{itemize}
\item[(i)] For every $j\ge 1$, we have
\begin{equation*}
\limsup_{\varepsilon \to 0 }\Norm{U_j}{L^{2+4/d}(I^\varepsilon_j\times
  \R^d)}<+\infty ,\quad 
\text{where }I_j^\varepsilon := (h_j^\varepsilon)^{-2}\(I^\varepsilon-t_j^\varepsilon\) . 
\end{equation*}
\item[(ii)] $\dis \limsup_{\varepsilon \to 0 }\Norm{u^\varepsilon}{L^{2+4/d}(I^\varepsilon\times
\R^d)}<+\infty $.
\end{itemize}
Moreover, if \emph{(i)} or \emph{(ii)} holds, then $u^\varepsilon = \dis
\sum_{j=1}^\ell N_j^\varepsilon(\phi_j) + r_\ell^\varepsilon + \rho_\ell^\varepsilon$, where
$r_\ell^\varepsilon$ is given by Theorem~\ref{theo:profile}, and:
\begin{align*}
\limsup_{\varepsilon \to 0} &\(\Norm{\rho_\ell^\varepsilon}{L^{2+4/d}(I^\varepsilon\times \R^d)} +
\Norm{\rho_\ell^\varepsilon}{L^\infty(I^\varepsilon;L^2( \R^d))}\) \Tend \ell {+\infty} 0
,\\
 N_j^\varepsilon(\phi_j) (t,x)&= e^{ix\cdot
   \xi_j^\varepsilon-i\frac{t}{2}\lvert\xi_j^\varepsilon\rvert^2} 
 \frac{1}{(h_j^\varepsilon)^{d/2}} U_j\( \frac{t-t_j^\varepsilon}{(h_j^\varepsilon)^2}\virgp
 \frac{x-x_j^\varepsilon-t\xi_j^\varepsilon}{h_j^\varepsilon}\) .
\end{align*}
\end{theorem}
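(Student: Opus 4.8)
This is the $L^2$-critical version of the nonlinear profile decomposition of \cite{BG99,Keraani01}, and the plan is to bootstrap from the linear decomposition of Theorem~\ref{theo:profile} to the nonlinear one exactly as there, the only genuinely new ingredient being the Galilean parameters $\xi_j^\varepsilon$. The first step is to introduce, for each $\ell\ge 1$, the approximate solution
\begin{equation*}
  \widetilde u_\ell^\varepsilon := \sum_{j=1}^\ell N_j^\varepsilon(\phi_j) + r_\ell^\varepsilon ,
\end{equation*}
with $N_j^\varepsilon(\phi_j)$ and $r_\ell^\varepsilon$ as in the statement and $r_\ell^\varepsilon(t)=e^{i\frac{t}{2}\Delta}r_\ell^\varepsilon(0)$ a free solution. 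Since each $N_j^\varepsilon(\phi_j)$ solves \eqref{eq:nls} (scaling, space--time translations and Galilean boosts all preserve the equation) and $r_\ell^\varepsilon$ is a free solution, $\widetilde u_\ell^\varepsilon$ solves \eqref{eq:nls} up to the source
\begin{equation*}
  e_\ell^\varepsilon = \lvert \widetilde u_\ell^\varepsilon\rvert^{4/d}\widetilde u_\ell^\varepsilon - \sum_{j=1}^\ell \lvert N_j^\varepsilon(\phi_j)\rvert^{4/d} N_j^\varepsilon(\phi_j) .
\end{equation*}

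Next I would bound the Strichartz norm of $\widetilde u_\ell^\varepsilon$ uniformly in $\varepsilon$ and $\ell$. By the $L^2$-orthogonality \eqref{eq:orth}, only finitely many $\phi_j$ carry a non-negligible mass; for the remaining ones $\Norm{\phi_j}{L^2}$ is small, so the small-data theory of \cite{CW89} gives a global $U_j$ with $\Norm{U_j}{L^{2+4/d}(\R\times\R^d)}\lesssim \Norm{\phi_j}{L^2}$, a summable bound, while the finitely many large profiles are controlled by hypothesis~(i). Combined with the almost-orthogonality in $L^{2+4/d}$ of the rescaled, boosted, translated profiles, this yields $\limsup_{\varepsilon\to 0}\Norm{\widetilde u_\ell^\varepsilon}{L^{2+4/d}(I^\varepsilon\times\R^d)}\le C$ with $C$ independent of $\ell$. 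Then I would show that $\limsup_{\varepsilon\to 0}\Norm{e_\ell^\varepsilon}{L^{(2+4/d)'}(I^\varepsilon\times\R^d)}\Tend \ell {+\infty} 0$: expanding the nonlinearity, the cross terms split into profile/profile interactions, which vanish as $\varepsilon\to 0$ thanks to the orthogonality of $(h_j^\varepsilon,t_j^\varepsilon,x_j^\varepsilon,\xi_j^\varepsilon)$, and profile$/r_\ell^\varepsilon$ interactions, which vanish because $\Norm{r_\ell^\varepsilon}{L^{2+4/d}(\R\times\R^d)}\to 0$ as $\ell\to+\infty$.

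With these two estimates in hand, I would invoke the long-time perturbation lemma for the $L^2$-critical equation \eqref{eq:nls} (see e.g. \cite{CazCourant,TaoDisp} and the profile decomposition references): since $\widetilde u_\ell^\varepsilon$ has bounded $L^{2+4/d}(I^\varepsilon\times\R^d)$ norm, solves \eqref{eq:nls} up to a source small in the dual Strichartz norm, and its datum at $t=0$ differs from $\phi^\varepsilon$ by an $L^2$-quantity tending to $0$ as $\varepsilon\to 0$ (a consequence of $\Norm{(U_j-V_j)(-t_j^\varepsilon/(h_j^\varepsilon)^2)}{L^2}\to 0$), it follows that for $\ell$ large and $\varepsilon$ small the solution $u^\varepsilon$ of \eqref{eq:nls} exists on $I^\varepsilon$ and $\rho_\ell^\varepsilon:=u^\varepsilon-\widetilde u_\ell^\varepsilon$ satisfies the stated smallness. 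This proves at once (i)$\Rightarrow$(ii) and the expansion. For the converse, I would argue by contradiction: if some $U_{j_0}$ had unbounded $L^{2+4/d}(I_{j_0}^\varepsilon\times\R^d)$ norm, one undoes the $j_0$-th scaling/translation/boost in $u^\varepsilon$, passes to the weak limit, and uses the orthogonality to see that $U_{j_0}$ is the only surviving profile; applying the perturbation lemma on subintervals of $I_{j_0}^\varepsilon$ then forces $\Norm{u^\varepsilon}{L^{2+4/d}(I^\varepsilon\times\R^d)}$ to blow up, contradicting (ii).

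The step I expect to be the main obstacle is the first estimate: one has to check that the nonlinear interaction of two profiles carrying \emph{different} frequency parameters $\xi_j^\varepsilon\neq\xi_k^\varepsilon$ is genuinely negligible, and that everything is uniform over the $\varepsilon$-dependent, possibly unbounded, intervals $I^\varepsilon$. Since global existence in $L^2$ for \eqref{eq:nls} is not known, none of the solutions involved is a priori global --- which is precisely why the conclusion is phrased as the equivalence (i)$\Leftrightarrow$(ii) --- so one must apply the perturbation lemma only after the Strichartz norm of the approximate solution has been bounded, never presupposing global existence. All of this is carried out in detail in \cite{MerleVega98,CK07,BegoutVargas}, to which we refer.
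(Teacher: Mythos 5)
The paper itself gives no proof of this theorem: it is recalled verbatim from the cited sources \cite{MerleVega98,CK07,BegoutVargas} (following the $\dot H^1$ template of \cite{Keraani01}), so there is no internal argument to compare yours against. Your sketch follows exactly the route of those references --- approximate solution built from the boosted, rescaled nonlinear profiles plus the linear remainder, orthogonality of the parameters to make the cross terms of the nonlinearity small in the dual Strichartz norm, then the $L^2$-critical stability lemma, with the conditional (i)$\Leftrightarrow$(ii) formulation forced by the absence of a global $L^2$ theory --- and the obstacles you flag (interaction of profiles with distinct $\xi_j^\varepsilon$, uniformity over the $\varepsilon$-dependent intervals) are precisely the points treated in detail in those papers, so deferring to them, as the paper itself does, is appropriate.
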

If, as expected, one has $I_j^\varepsilon=I^\varepsilon=\R$ for all $j$, then this
result is the exact analogue of Theorem~\ref{theo:pnl}.

\end{document}